\newtheorem{theorem}{Theorem}[section]
\newtheorem{lemma}[theorem]{Lemma}
\newtheorem{hypothesis}{Hypothesis}[theorem]
\theoremstyle{definition}
\newtheorem{definition}[theorem]{Definition}
\theoremstyle{remark}
\newtheorem{remark}[theorem]{Remark}
\newcommand{\qp}{{\mathbf{Q}}_{p}}
\newcommand{\qpbar}{\overline{\mathbf{Q}}_p}
\newcommand{\diag}{\mathrm{diag}}
\begin{document}

\title{$p$-adic Langlands functoriality for the definite unitary group}
\author{Paul-James White}
\date{\today}                                           % Activate to display a given date or no date
\email{paul-james.white@all-souls.ox.ac.uk}

\begin{abstract}
We
 formalise a notion of $p$-adic Langlands functoriality for the definite unitary group.
This extends the classical notion of Langlands functoriality to the setting of eigenvarieties.  
We apply some results of Chenevier to 
obtain some cases of $p$-adic Langlands functoriality
by interpolating known cases of classical Langlands
functoriality.
\end{abstract}

\maketitle

\section{Introduction}
The Langlands functoriality conjectures predict  deep properties of automorphic representations of  
connected reductive groups.  For definite unitary groups,  Chenevier \cite{chenevier_application_varieties_hecke}
has constructed a space of $p$-adic automorphic forms which interpolate the classical automorphic forms.
The $p$-adic automorphic forms can be parameterised by a
$p$-adic rigid analytic space known as the \emph{eigenvariety}.  It is a natural problem to extend the notion of Langlands functoriality to the setting of eigenvarieties, that is to introduce a notion of $p$-adic Langlands functoriality.
Chenevier \cite{chenevier_jacquet_langlands_p_adic} studied this problem in the case of the Jacquet-Langlands correspondence.  Chenevier was able to extend the Jacquet-Langlands correspondence to
the setting of eigenvarieties. The aim of this article is to formalise a notion of $p$-adic Langlands functoriality for the definite unitary group and to construct some examples.

The $\qpbar$-points on an eigenvariety $\mathcal{D}$ are parameterised by pairs $(\lambda, \kappa)$ 
where 
$\lambda : \mathcal{H}^- \rightarrow \qpbar^\times$ is a system of eigenvalues and $\kappa \in \mathcal{W}(\qpbar)$ a weight
coming from certain $p$-adic automorphic forms.  In order to introduce a notion of $p$-adic functoriality, we
construct under a  technical hypothesis (see Hypothesis \ref{hypothesis_1})  morphisms between the Hecke algebras and the weight spaces of definite unitary groups corresponding
to a given $L$-homomorphism.  
The constructions are natural generalisations of the  constructions appearing in classical
Langlands functoriality with one important distinction.  Unlike classical Langlands functoriality, 
the notion of $p$-adic Langlands functoriality depends upon a non-canonical choice for the refinement map in addition
to the chosen $L$-homomorphism.  Different choices for the refinement map give rise to different cases
of $p$-adic Langlands functoriality.  The need to define the refinement map comes from the fact that the
eigenvariety interpolates pairs  of automorphic forms and accessible refinements.  The need to transfer
refinements between groups requires us to introduce a non-canonical map.
Having made the appropriate definitions, we proceed to obtain certain cases of 
$p$-adic Langlands functoriality  for which
the  classical Langlands functoriality transfer is known.  The $p$-adic Langlands functorial transfer is obtained by interpolating the classical Langlands functorial transfer using work of Chenevier \cite{chenevier_jacquet_langlands_p_adic}.

Let us describe the contents of this article.  In Section \ref{section_the_eigenvariety}, we recall the construction of the 
eigenvariety for the definite unitary group.  In Section \ref{section_p_adic_funct_def}, we introduce the notion
of $p$-adic Langlands functoriality for the definite unitary group.  In Section \ref{p_adic_functoriality_examples},
we interpolate known cases of Langlands functoriality to obtain some cases of $p$-adic Langlands functoriality 
for the definite unitary group.

\subsection{Acknowledgements} 
I would like to thank Ga\"etan Chenevier,  Michael Harris, Judith Ludwig, and Alberto Minguez for helpful comments and conversations.
  I am particularly grateful to Ga\"etan Chenevier who explained how his results could be applied to the problem at hand.
I would also like to thank an anonymous referee whose comments significantly improved this article.

\subsection{Conventions and notation}
We shall normalise the Artin map from class field theory so as to send uniformizers to geometric Frobenii.
The local Langlands correspondence shall be normalised as in Harris-Taylor \cite{harris_taylor_local_langlands}.
Unless specified otherwise, we shall assume that representations are irreducible and admissible with complex coefficients.
Parabolic induction shall always refer to unitarily normalised parabolic induction.

If $k$ is a $p$-adic field, we shall fix a uniformizer  $\overline{\omega}_k$ which shall also be written as  $\overline{\omega}$.
We shall fix   field isomorphisms $\iota_p : \mathbf{C} \stackrel{\sim}{\rightarrow} \qpbar$ for all rational primes $p$.

\section{The eigenvariety}
\label{section_the_eigenvariety}
We shall recall the construction of the eigenvariety for  definite unitary groups.  The construction is  due to
Buzzard \cite{buzzard_eigenvariety} and Chenevier \cite{chenevier_application_varieties_hecke} (see also Loeffler \cite{loeffler_article}).

\subsection{The definite unitary group}
Let $E/F$ be a totally imaginary quadratic extension of a totally real field.  Let $U = U_n = U_n(E/F)$ denote
the unitary group in $n$-variables associated to the extension $E/F$ that is compact at infinity
and quasi-split at all finite places (cf. \cite[\S2]{white_endoscopy_unitary_group}).
 Such a group  exists exactly when either
 	 $n$ is odd or
	  $\frac{n\cdot [F: \mathbf{Q}]}{2}$ is even  (cf. \cite[Proposition 2.1]{white_endoscopy_unitary_group}).
 The local forms of $U$ are described below.
 \begin{itemize}
 \item For all archimedean places $\nu$ of $F$, $U_\nu = U \times_F F_\nu$ is isomorphic to the  $n$-variable real compact unitary group.
 \item For all finite places $\nu$ of $F$ that are non-split in $E$, $U_\nu \simeq U_n^*(E_\nu/F_\nu)$ the $n$-variable quasi-split unitary group associated to the extension $E_\nu/F_\nu$.
 \item For all finite places $\nu$ of $F$ that split in $E$, $U_\nu \simeq GL_n/F_\nu$.  We warn the reader that the isomorphism is non-canonical.
  It essentially depends upon the choice of a place of $E$ lying above $\nu$ (cf. \cite[\S2]{white_endoscopy_unitary_group}).  We shall fix such an isomorphism at  the finite split places
 and identify the two groups.
 \end{itemize}
 Let $\vec{n} = (n_1, \ldots, n_r) \in \mathbf{N}^r$, and let $n = \sum_{i=1}^r n_i$.
 We shall  be interested in the fibre product of the definite unitary groups
 \[
 	U_{\vec{n}} = \prod_{i=1}^r U_{n_i}
 \]
 where we have implicitly assumed that $\vec{n}$ is  such that the individual definite unitary groups $U_{n_i}$ exist.
 We shall fix an extension of $U_{\vec{n}}/F$ to a smooth group scheme $U_{\vec{n}}/\mathcal{O}_{F, S_{\mathrm{ram}}}$
 where $S_{\mathrm{ram}}$ is the set of archimedean places of $F$ and the finite places of $F$ that ramify in $E$.
 
 \subsection{Some subgroups}
 Let $\nu$ be a finite place of $F$.  We shall  recall some important subgroups of $GL_m(F_\nu)$ below.
 \begin{itemize}
 \item Let $T_\nu \subset GL_m(F_\nu)$ be the maximal split torus consisting of the  diagonal matrices.
 \item Let $T^0_\nu \subset T_\nu$ be the maximal compact subgroup consisting of the diagonal matrices in $GL_m(\mathcal{O}_{F_\nu})$.
 \item Let $T_\nu^- \subset T_\nu$ (resp. $T_\nu^{--} \subset T_\nu$)  be the submonoid consisting of the elements of the form
 	\[
	\mathrm{diag}(x_1,\ldots, x_m)
	\]
	where $\mathbf{v}(x_1) \geq \mathbf{v}(x_2) \geq \cdots \geq \mathbf{v}(x_m)$
	(resp. $\mathbf{v}(x_1) > \cdots  > \mathbf{v}(x_m)$) where $\mathbf{v}(x)$ denotes the valuation of an element $x \in F_\nu$.
	\item Let $T_\nu^{\overline{\omega}} \subset T_\nu$ (resp. $T_\nu^{\overline{\omega}, -} \subset T_\nu$)
	be the subgroup (resp. submonoid) consisting of the elements of the form
	 	\[
		\mathrm{diag}(\overline{\omega}^{\lambda_1},\ldots, \overline{\omega}^{\lambda_m})
	\]
	where $\lambda_1, \ldots, \lambda_m \in \mathbf{Z}$ (resp.
	$\lambda_1 \geq \cdots \geq \lambda_m$).

\item Let $B_\nu \subset GL_m(F_\nu)$ be the Borel subgroup consisting of the upper triangular matrices.
\item Let $N_\nu \subset GL_m(F_\nu)$ be the unipotent subgroup consisting of upper triangular matrices with $1$ on their diagonals.
\item Let $I_\nu \subset GL_m(\mathcal{O}_{\nu})$ be the Iwahori subgroup consisting of the upper triangular matrices  modulo $\overline{\omega}$.
\item Let $M_\nu \subset GL_m(F_\nu)$ be the submonoid generated by $I_\nu$ and $T_\nu^-$.
 \end{itemize}
 We have canonical isomorphisms $T_\nu/T_\nu^0 \simeq T_\nu^{\overline{\omega}}$ and
 $T_\nu^-/T_\nu^0 \simeq T_\nu^{\overline{\omega}, -}$.
 If $\nu$ splits in $E$, then we  define the analogous subgroups of
 ${U_{\vec{n}}(F_\nu) = GL_{n_1}(F_\nu) \times \cdots \times GL_{n_r}(F_\nu)}$
 in the obvious way (e.g. $T_\nu = T_{GL_{n_1},\nu} \times \cdots \times T_{GL_{n_r},\nu}$ where   $T_{GL_{n_i},\nu}$ denotes
  the 
 previously  defined subgroup of $GL_{n_i}(F_\nu)$  of diagonal matrices).  If $S$ is a finite set of non-archimedean places of $F$ that split in $E$, then we   define the analogous subgroups of $\prod_{\nu \in S} U_{\vec{n}}(F_\nu)$  in the obvious way (e.g. $T_S = \prod_{\nu \in S} T_\nu$).
 
 \subsection{The datum}
 An eigenvariety for $U_{\vec{n}}$ depends upon the choice of a datum $(p, S, e, \phi)$ whose elements are  as follows.
 \begin{itemize}
 	\item $p$ is a rational prime that  splits completely in $E$.
	\item $S_p$ is the set of places of $F$ lying above $p$.
	\item $S_\infty$ is the set of archimedean places of $F$.
	\item $S$ is a finite set of non-archimedean places of $F$ containing all the
	 places of $F$ that are ramified in $E$ and such that $S \cap S_p = \emptyset$.
	\item $K^S = \prod_{\nu\not\in S} K_\nu \subset U_{\vec{n}}(\mathbf{A}^S_{f})$ is the compact open
	subgroup such that $K_\nu = U_{\vec{n}}(\mathcal{O}_\nu)$ is maximal hyperspecial for all $\nu \not\in S_p \cup S$ and $K_\nu = I_\nu$ for all $\nu \in S_p$.
	\item For all $\nu \in S$, $e_\nu$ is a non-trivial idempotent of the Hecke algebra
	$\mathcal{C}_c^\infty\left(U_{\vec{n}}(F_\nu), \qpbar \right)$.
	\item $e = \otimes_{\nu \in S} e_\nu \otimes \mathbf{1}_{K^{S}}$  seen as an idempotent of the 
	Hecke algebra 	$\mathcal{C}_c^\infty(U_{\vec{n}}(\mathbf{A}_{f}), \qpbar)$ where $\mathbf{1}_{K^{S}}$ denotes the identity function on the compact open subgroup
	$K^{S}$.
	\item $\phi \in \qpbar[T_{S_p}^{--}]$.
 \end{itemize}
 
  We shall 
  identify the sets $S_\infty$ and $S_p$ via 
   the field isomorphism $\iota_p : \mathbf{C} \stackrel{\sim}{\rightarrow} \qpbar$.
   Explicitly this bijection is given as follows.
 \begin{eqnarray*}
 \iota_p^{-1} : \mathrm{Hom}(F, \qpbar) &\rightarrow& \mathrm{Hom}(F, \mathbf{C})	\\
 \nu &\mapsto& \iota_p^{-1} \circ \nu
 \end{eqnarray*}
 
 \subsection{The Hecke algebras}
 We shall recall the commutative Hecke algebras that are relevant to a chosen datum $(p, S, e, \phi)$.
 The \emph{spherical Hecke algebra} of $U_{\vec{n}}$ outside of $S \cup S_p$ is defined to be
 \[
 	\mathcal{H}_{\mathrm{ur}} = \mathcal{C}_{\mathrm{c}}^\infty(
		K^{S\cup S_p} \backslash U_{\vec{n}}(\mathbf{A}_f^{S \cup S_p}) / K^{S \cup S_p}, \qpbar).
 \]
 If $\nu \in S_p$, then we shall have need of the   \emph{Atkin-Lehner} sub-algebra $\mathfrak{U}_\nu^-$
of the Hecke-Iwahori algebra
\[
 	 \mathcal{A} = \mathcal{C}_{\mathrm{c}}^\infty(
		I_\nu \backslash U_{\vec{n}}(F_\nu) / I_\nu, \qpbar).
\]
The $\qpbar$-subalgebra $\mathfrak{U}_\nu^- \subset \mathcal{A}$ is defined to be the sub-algebra generated by the identity functions
$\mathbf{1}_{I_\nu t I_\nu}$ for all $t \in T_\nu^-$.  
We finish by defining the commutative  $\qpbar$-algebras
\[	\mathfrak{U}^- = \otimes_{\nu\in S_p} \mathfrak{U}_\nu^- \ \text{ and } \
	\mathcal{H}^- = \mathfrak{U}^- \otimes \mathcal{H}_{\mathrm{ur}}.
\]

 \subsection{The weight space}
 \label{subsection_the_weight_space}
 If $\nu \in S_p$, then one defines $\mathcal{W}_\nu$  to be the $\qp$-rigid analytic space
 that represents the functor 
 \[
 	\mathcal{W}_\nu = \mathrm{Hom}_{\mathrm{cts-gp}}(T_\nu^0, \mathbf{G}_m^{\mathrm{rig}}).
 \]
 The \emph{weight space} is defined to be the $\qp$-rigid analytic space
 \[
 	\mathcal{W} = \prod_{\nu\in S_p} \mathcal{W}_\nu.
 \]
 \begin{remark}
 The weight space $\mathcal{W}$ is isomorphic to the finite disjoint union of open balls of dimension $n [F : \mathbf{Q}]$.
 \end{remark}

    Let $\omega$ be a real place of $F$ and let $\nu = \iota_p \circ \omega$ be the corresponding place above $p$.  
 We have the group embedding
  \[
 	U_{\vec{n}}(F_\omega) \hookrightarrow U_{\vec{n}}(E_\omega)
		= (GL_{n_1} \times \cdots \times GL_{n_r} )(E_\omega).
 \]
 We shall fix   $E \hookrightarrow \mathbf{C}$ a field embedding above $\omega$. This induces an isomorphism
 $E_\omega \simeq \mathbf{C}$ from which we obtain the group embedding
 \[
 	U_{\vec{n}}(F_\omega) \hookrightarrow (GL_{n_1} \times \cdots \times GL_{n_r})(\mathbf{C})
	\stackrel{\iota_p}{\rightarrow} (GL_{n_1} \times \cdots \times GL_{n_r})(\overline{F}_\nu)
	= U_{\vec{n}}(\overline{F}_\nu).
 \]
 The group $U_{\vec{n}}\left(F_\omega\right)$ is compact. Its irreducible admissible representations are obtained by restriction from the finite dimensional irreducible algebraic representations of 
 $U_{\vec{n}}(\overline{F}_\nu)$ (cf. \cite[\S6.7]{bellaiche_chenevier_book}).
  Such representations  are classified by their \emph{highest weight characters}
 (with respect to the Borel $B_\nu$).  
  If $\pi_\omega$ is an irreducible admissible representation of $U_{\vec{n}}(F_\omega)$, then
   we shall write
 \[
 	\kappa(\pi_\omega) : T_\nu \rightarrow \qpbar^\times
 \]
 for the highest weight character of $\pi_\omega$.  The character is of the following form. 
 \begin{eqnarray*}
 \kappa(\pi_\omega): 	T_{1,\nu} \times \cdots \times T_{r,\nu} &\rightarrow& \qpbar^\times	\\
	\diag\left(x_{1,1}, \ldots, x_{1,n_1}\right) \times \cdots \times \diag\left(x_{r,1}, \ldots, x_{r,n_r}\right) 
		&\mapsto& \prod_{i=1}^r \prod_{j=1}^{n_i} x_{i,j}^{k_{i,j}}
 \end{eqnarray*}
 where 
  $k_{i,j} \in \mathbf{Z}$ and $k_{i,j} \geq k_{i,k}$ 
  for all $1 \leq i \leq r$ and
  for all $1 \leq j \leq k \leq n_i$.
  The tuple of integers 
  \[
  \{k_{i,j} : 1 \leq i \leq r, 1 \leq j \leq n_i\}
  \]
   is called the set of  \emph{highest weights} of
 the representation
 $\pi_\omega$.
 The highest weight is said to be \emph{regular} if
 \[
 	k_{i,j} > k_{i,k}
 \]
 for all $1 \leq i \leq r$ and
  for all $1 \leq j < k \leq n_i$.  
  
      If $\pi_\infty$ is an irreducible admissible representation of $U_{\vec{n}}(\mathbf{A}_{\infty})$, then
     $\pi_\infty$ is said have \emph{regular highest weight} if
  for all $\omega|\infty$,  the highest weight of $\pi_\omega$ is regular.
  We shall write
 \[
 	\kappa(\pi_\infty) = \otimes_{\omega|\infty} \kappa(\pi_\omega) : T_{S_p} \rightarrow \qpbar^\times
 \]
  for the \emph{highest weight character} of $\pi_\infty$. 
  Such a character (or more precisely its restriction to $T^0_{S_p}$) gives a $\qpbar$-valued point in the weight space
 \[
 	\kappa(\pi_\infty) \in \mathcal{W}(\qpbar).
 \]
 A point $\delta \in \mathcal{W}(\qpbar)$ is said to be $\emph{regular classical}$ if it is of the form
 $\kappa(\pi_\infty)$ for some $\pi_\infty$ of regular highest weight.

 \subsection{Refinements}
 We shall recall the notion of an accessible refinement following \cite[\S1.4]{chenevier_application_varieties_hecke}
 (see also \cite[\S6.4]{bellaiche_chenevier_book}).
 Let $\nu \in S_p$ and let $\pi_\nu$ be an irreducible admissible representation of 
 $U_{\vec{n}}(F_\nu) = GL_{n_1}(F_\nu) \times \cdots \times GL_{n_r}(F_\nu)$.
 A \emph{refinement} of $\pi_\nu$ is an unramified character
 \[
 	\chi_\nu : T_\nu^{\overline{\omega}} \simeq T_\nu/T^0_\nu \rightarrow \qpbar^\times
 \]
 such that $\pi_\nu$ appears as a constituent of the  induced
 representation $\mathrm{Ind}_{B_\nu}^{U_{\vec{n}, \nu}} \iota_p^{-1} \chi_\nu$.
 A refinement is said to be \emph{accessible} if $\pi_\nu$ appears as a \emph{subrepresentation} of the
  induced representation.  
  
  \begin{remark}
 A continuous group homomorphism $\chi_\nu :  T_\nu^{\overline{\omega}, -}  \rightarrow \qpbar^\times$ uniquely extends to
a continuous group homomorphism $\chi_\nu : T_\nu^{\overline{\omega}} \rightarrow \qpbar^\times$.
These two equivalent descriptions  shall be used interchangeably throughout this article.
 \end{remark}
  
  The following result shows that accessible refinements exist exactly for 
  the
 representations with Iwahori-invariant vectors.
 \begin{lemma}[Bernstein, Borel, Casselman, Matsumoto]
 \label{lemma_bernstein_borel_casselman_matsumoto}
 Let $\nu \in S_p$.  
\begin{itemize}
	\item We have the equality $M_\nu = \sqcup_{t \in T_\nu^{\overline{\omega},-}} I_\nu t I_\nu$.  Furthermore, the map
	\[
		\tau : M_\nu \rightarrow  T_\nu^{\overline{\omega},-}
	\]
	defined by the relation $m \in I_\nu \tau(m) I_\nu$ is a surjective multiplicative homomorphism.
\item
	The map
	\begin{eqnarray*}
		 T_\nu^{\overline{\omega},-} &\rightarrow& \mathfrak{U}_\nu^-	\\
		t &\mapsto&  \mathbf{1}_{I_\nu t I_\nu}
	\end{eqnarray*}
	is multiplicative, and it extends to give the $\qpbar$-algebra isomorphism
	\[
		\qpbar[ T_\nu^{\overline{\omega},-}] \stackrel{\sim}{\rightarrow} \mathfrak{U}_\nu^-.
	\]
\item Let $\pi_\nu$ be an irreducible admissible representation of $U_{\vec{n}}\left(F_\nu\right)$.
  We shall view $\pi_\nu$ as a 
$\qpbar[ T_\nu^{\overline{\omega},-}]$-module via the previous isomorphism.  Then
\[
	\left(\pi_\nu^{I_\nu}\right)^{\mathbf{ss}} \simeq \bigoplus  \iota_p^{-1}\circ (\chi_\nu  \delta_{B_\nu}^{-{1}/{2}})
\]
where $\chi_\nu$ runs through the accessible refinements of $\pi_\nu$, $\mathbf{ss}$ denotes  semi-simplification,
and $\delta_{B_\nu}: T_\nu/T^0_\nu \rightarrow \qpbar^\times$ denotes the modulus character viewed with coefficients in $\qpbar$ via the isomorphism $\iota_p$.
\end{itemize}
 \end{lemma}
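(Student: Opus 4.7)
My plan is to treat the three bullets in order: the first two rely on the Iwahori factorization and an explicit convolution calculation, while the third is essentially a translation of a theorem of Casselman.

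For the first bullet, I would begin with the Iwahori factorization $I_\nu = (I_\nu \cap \overline{N}_\nu) \cdot T_\nu^0 \cdot (I_\nu \cap N_\nu)$, where $\overline{N}_\nu$ is the unipotent radical opposite to $B_\nu$. The essential geometric input is the contraction property: for $t \in T_\nu^{\overline{\omega},-}$ one has $t (I_\nu \cap N_\nu) t^{-1} \subseteq I_\nu \cap N_\nu$ and $t^{-1} (I_\nu \cap \overline{N}_\nu) t \subseteq I_\nu \cap \overline{N}_\nu$. Together with the Cartan decomposition of $GL_n(F_\nu)$, this yields $M_\nu = \bigsqcup_{t \in T_\nu^{\overline{\omega},-}} I_\nu t I_\nu$, so $\tau$ is well-defined. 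For multiplicativity, given $m_i = u_i t_i v_i \in I_\nu t_i I_\nu$, I would Iwahori-factor the interior element $v_1 u_2 \in I_\nu$ and slide its $N_\nu$- and $\overline{N}_\nu$-components past $t_1$ and $t_2$ respectively using the contraction; this puts $m_1 m_2$ into $I_\nu t_1 t_2 I_\nu$.

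For the second bullet, the same contraction yields the convolution computation: $\mathbf{1}_{I_\nu t_1 I_\nu} * \mathbf{1}_{I_\nu t_2 I_\nu}$ is supported on $I_\nu t_1 t_2 I_\nu$ (by the first bullet) and constant there, with value $1$ after normalising $\mathrm{vol}(I_\nu) = 1$. Linear independence of the $\mathbf{1}_{I_\nu t I_\nu}$ is immediate from disjointness of supports, so the map extends to the asserted algebra isomorphism.

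For the third bullet, I would invoke Casselman's theorem: the canonical projection $\pi_\nu \twoheadrightarrow (\pi_\nu)_{N_\nu}$ restricts to a $\qpbar$-linear isomorphism $\pi_\nu^{I_\nu} \xrightarrow{\sim} ((\pi_\nu)_{N_\nu})^{T_\nu^0}$, and under this isomorphism the Atkin--Lehner action of $\mathbf{1}_{I_\nu t I_\nu}$ corresponds to the action of $\delta_{B_\nu}(t)^{-1} \cdot t$ on the Jacquet module; the factor $\delta_{B_\nu}^{-1}$ comes from the coset count $|I_\nu t I_\nu / I_\nu| = \delta_{B_\nu}(t)^{-1}$, with coset representatives in $(I_\nu \cap N_\nu)/t(I_\nu \cap N_\nu)t^{-1}$ acting trivially modulo $N_\nu$. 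By Frobenius reciprocity for unitarily normalized induction, the characters of $T_\nu$ appearing in the semisimplification of the finite-length module $(\pi_\nu)_{N_\nu}$ are exactly $\iota_p^{-1}(\chi_\nu \delta_{B_\nu}^{1/2})$ as $\chi_\nu$ ranges over accessible refinements; after the $\delta_{B_\nu}^{-1}$-twist from Casselman's intertwiner these become $\iota_p^{-1}(\chi_\nu \delta_{B_\nu}^{-1/2})$, as required.

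The main obstacle is the bookkeeping of modulus characters in the third bullet: the unitary normalization of parabolic induction contributes a $\delta_{B_\nu}^{1/2}$ via Frobenius, the Casselman intertwiner contributes a $\delta_{B_\nu}^{-1}$ from the coset count, and these must be reconciled with the identification $\mathfrak{U}_\nu^- \simeq \qpbar[T_\nu^{\overline{\omega},-}]$ of the second bullet so that the final exponent is $-1/2$. The remaining content is formal $p$-adic group theory.
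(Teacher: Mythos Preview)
The paper does not give a proof of this lemma: it simply cites \cite[\S6.4]{bellaiche_chenevier_book}. Your proposal is essentially a correct sketch of the standard argument that one finds in that reference, so in that sense you are doing exactly what the paper defers to. The treatment of the first two bullets via the Iwahori factorisation and the contraction property is the usual one, and your coset count $|I_\nu t I_\nu / I_\nu| = \delta_{B_\nu}(t)^{-1}$ together with Casselman's isomorphism gives the right normalisation in the third bullet.

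One point deserves care. You write that ``the characters of $T_\nu$ appearing in the semisimplification of $(\pi_\nu)_{N_\nu}$ are exactly $\iota_p^{-1}(\chi_\nu\delta_{B_\nu}^{1/2})$ as $\chi_\nu$ ranges over accessible refinements''. Frobenius reciprocity in the form $\mathrm{Hom}_G(\pi,\mathrm{Ind}_B^G\chi)=\mathrm{Hom}_T(r_B(\pi),\chi)$ identifies the accessible refinements with the characters occurring as \emph{quotients} of $r_B(\pi)$, not a priori with all characters in its semisimplification. For most $\pi$ these coincide, but in degenerate cases (e.g.\ an irreducible unramified principal series $\mathrm{Ind}_B^G\chi$ with $\chi$ fixed by a nontrivial Weyl element) $r_B(\pi)$ can fail to be semisimple and the multiplicities as quotient and as Jordan--H\"older factor differ. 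This imprecision is already present in the lemma as stated, so it is not a defect of your argument relative to the paper; but if you want a clean statement you should either record the multiplicity of $\chi$ as its Jordan--H\"older multiplicity in $r_B(\pi)$, or restrict to $\pi$ for which $r_B(\pi)$ is semisimple.
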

 \begin{proof}
  \cite[\S6.4]{bellaiche_chenevier_book}
 \end{proof}
  
 \begin{remark}
To give a refinement of  an irreducible admissible representation of $U_{\vec{n}}(F_\nu)$
with an Iwahori invariant vector is equivalent to giving
  an ordering of the eigenvalues 
 of the semi-simple conjugacy class of $(GL_{n_1} \times \cdots \times GL_{n_r})(\qpbar)$ associated to the geometric Frobenius  via the local Langlands correspondence.  In general, not all refinements will be accessible.  For example, the Steinberg representation has a single accessible refinement.
 \end{remark}

 If $\pi$ is an automorphic representation of $U_{\vec{n}}(\mathbf{A}_F)$,  
 then a \emph{refinement} (resp. \emph{accessible refinement}) of $\pi$ is an unramified character
 \[
 	\chi = \otimes_{\nu \in S_p} \chi_\nu: T_{S_p}^{\overline{\omega}} \simeq  T_{S_p}/T_{S_p}^0 \rightarrow \qpbar^\times
 \]
 such that  $\chi_\nu$ is a  refinement (resp. accessible refinement) of the representation
 $\pi_\nu$ for all $\nu \in S_p$.
 To avoid  problems of algebraicity in the construction of the eigenvariety,
 one normalises a refinement $\chi$ of $\pi$ as follows
 \[
 	\nu(\pi, \chi) = \kappa(\pi_\infty)  \chi  \delta_{B_{S_p}}^{-1/2} : T_{S_p}^{\overline{\omega}} \rightarrow \qpbar^\times
 \]
  where $\kappa(\pi_\infty)$ is viewed by restriction as a character of $T_{S_p}^{\overline{\omega}}$.
  
  \subsection{Module valued automorphic forms}
  We shall recall the notion of a module valued automorphic form (see \cite{gross_algebraic_mf} for the general theory).
  Following  the discussion in Section \ref{subsection_the_weight_space}, we have the  commutative diagrams
  \[
 \xymatrix{
  	F \ar@{^{(}->}[d] \ar@{^{(}->}[r] & \mathbf{A}_{S_p} \ar@{^{(}->}[d]	& & U_{\vec{n}}(F) \ar@{^{(}->}[d]  \ar@{^{(}->}[r] & U_{\vec{n}}(\mathbf{A}_{S_p}) \ar@{^{(}->}[d]	\\
	\mathbf{A}_\infty \ar@{^{(}->}[r] & \prod_{\omega|\infty} \mathbf{C} & & U_{\vec{n}}(\mathbf{A}_\infty) \ar@{^{(}->}[r] & \prod_{\omega|\infty} GL_{n_1} \times \cdots GL_{n_r}(\mathbf{C})
  }
  \]
  where $F$ is embedded diagonally into both $\mathbf{A}_\infty$ and $\mathbf{A}_{S_p}$.
  An irreducible admissible representation $W$ of $U_{\vec{n}}(\mathbf{A}_\infty)$ is obtained via restriction
  from a unique irreducible algebraic representation of $\prod_{\omega|\infty} GL_{n_1} \times \cdots GL_{n_r}(\mathbf{C})$, which shall also be denoted by $W$.   
  Using the ring isomorphism $\iota_p : \mathbf{C} \stackrel{\sim}{\rightarrow} \qpbar$, we shall view $W$ 
  as a representation with coefficients in $\qpbar$.   The above diagram equips $W$ with a
  group action of $U_{\vec{n}}(\mathbf{A}_{S_p})$.
  We  define the $\qpbar$-vector space
  \[
  \mathcal{A}(U_{\vec{n}}, W)  =
  \begin{cases}
		f : U_{\vec{n}}(F) \backslash U_{\vec{n}}(\mathbf{A}_f) \rightarrow W :
		\text{$f$ is smooth outside of $S_p$, } \\
			\ \ \ \ \ \ \ \ \	 f(g  k_{S_p}) = k_{S_p}^{-1} f(g),  \forall g \in U_{\vec{n}}(\mathbf{A}_f), \forall k_{S_p} \in I_{S_p}
  \end{cases}
  \]
  where by $f$ is \emph{smooth outside} of $S_p$, we mean that $f$ is invariant under right translation by some compact open subgroup of $U_{\vec{n}}(\mathbf{A}^{S_p}_{f})$.
The monoid $U_{\vec{n}}(\mathbf{A}^{S_p}_f) \times M_{S_p}$ acts on the space 
as follows
\[
(\gamma k_{S_p} f)(g) = k_{S_p} f(g \gamma k_{S_p}) \ \ \ \forall \gamma \in U_{\vec{n}}(\mathbf{A}_f^{S_p})
\ \ \forall  k \in M_{S_p} \ \   \forall f \in \mathcal{A}(U_{\vec{n}}, W).
\]
This action allows us to view  $\mathcal{A}\left(U_{\vec{n}}, W\right)$  as a
$\mathcal{H}^-$-module.

The relationship between these modules and the usual notion of an automorphic representation
is given by the following result.
\begin{lemma}
\label{lemma_description_of_module_space_classical_terms_automorphic_representations}
There exists an isomorphism of $\mathcal{H}^-$-modules
\[
\mathcal{A}(U_{\vec{n}}, W) \bigotimes_{\qpbar, \iota^{-1}_p} \mathbf{C} \simeq
\bigoplus_{\Pi} m(\Pi) \Pi_f^{I_{S_p}}
\]
where $\Pi$ runs through the automorphic representations of $U_{\vec{n}}$
 such that 
$\Pi_\infty \simeq W$ and $\Pi_{S_p}^{I_{S_p}} \neq 0$, and 
 $m(\Pi)$ denotes the multiplicity of $\Pi$ in the discrete automorphic spectrum of $U_{\vec{n}}$.
\end{lemma}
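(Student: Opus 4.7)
The plan is to transport the module-valued space to $\mathbf{C}$-coefficients via $\iota_p$, identify it explicitly with a classical isotypic subspace of the automorphic spectrum of $U_{\vec{n}}$, and conclude using the purely discrete decomposition available since $U_{\vec{n}}(F_\infty)$ is compact.

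First, I would use $\iota_p^{-1}$ to regard $W$ as a complex irreducible representation of $U_{\vec{n}}(\mathbf{A}_\infty)$ via restriction from its defining algebraic representation of $\prod_\omega(GL_{n_1}\times\cdots\times GL_{n_r})(\mathbf{C})$. Next, I would construct a natural map
\[
\Phi \colon \mathcal{A}(U_{\vec{n}}, W) \otimes_{\qpbar,\iota_p^{-1}} \mathbf{C} \longrightarrow \mathrm{Hom}_{U_{\vec{n}}(\mathbf{A}_\infty)}\!\bigl(W,\, \mathcal{C}^\infty(U_{\vec{n}}(F)\backslash U_{\vec{n}}(\mathbf{A}_F),\mathbf{C})\bigr)^{I_{S_p}}
\]
by sending $f$ to the $U_{\vec{n}}(\mathbf{A}_\infty)$-equivariant map $w \mapsto \bigl((g_\infty,g_f) \mapsto \langle w, g_\infty^{-1} f(g_f)\rangle\bigr)$, where $\langle\cdot,\cdot\rangle$ is the canonical invariant pairing on the unitary representation $W$. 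Left $U_{\vec{n}}(F)$-invariance of the image follows from the equivariance built into $f$ combined with the fact that $\gamma \in U_{\vec{n}}(F)$ acts compatibly on $W$ through both its infinite and $S_p$-adic embeddings, as per the diagram recalled in Section~\ref{subsection_the_weight_space}. To see that $\Phi$ is an isomorphism, I would invoke strong approximation: the compactness of $U_{\vec{n}}(F_\infty)$ makes $U_{\vec{n}}(F)\backslash U_{\vec{n}}(\mathbf{A}_f)/I_{S_p}K^{S_p}$ a finite set, so $f$ is determined by finitely many values in $W$, and an inverse to $\Phi$ is built coset-by-coset using Frobenius reciprocity for the compact group $U_{\vec{n}}(F_\infty)$.

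Third, since $U_{\vec{n}}$ is anisotropic with compact archimedean points, $L^2(U_{\vec{n}}(F)\backslash U_{\vec{n}}(\mathbf{A}_F))$ decomposes purely discretely as $\bigoplus_\Pi m(\Pi)\Pi$. Passing to smooth vectors, then to the $W$-isotypic component at infinity, then to $I_{S_p}$-invariants at the finite part picks out exactly the summands indexed by automorphic $\Pi$ with $\Pi_\infty \simeq W$ and $\Pi_{S_p}^{I_{S_p}} \neq 0$, contributing $m(\Pi)\Pi_f^{I_{S_p}}$ each, and composing with $\Phi^{-1}$ gives the desired identification. I would then verify $\mathcal{H}^-$-equivariance: at places outside $S\cup S_p$ this is the standard matching of the spherical Hecke action on $K_\nu$-fixed vectors, while at $\nu \in S_p$ it reduces, via the double-coset decomposition $M_\nu = \sqcup_{t\in T_\nu^{\overline{\omega},-}} I_\nu t I_\nu$ of Lemma \ref{lemma_bernstein_borel_casselman_matsumoto}, to comparing the explicit action $(k_{S_p} f)(g) = k_{S_p} f(gk_{S_p})$ on module-valued forms with the natural action on $\Pi_f^{I_{S_p}}$.

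The main obstacle is the book-keeping of conventions rather than any deep ingredient: one must match the Iwahori twist $f(gk_{S_p}) = k_{S_p}^{-1}f(g)$ in the definition of $\mathcal{A}(U_{\vec{n}}, W)$ with the restriction of the $U_{\vec{n}}(\mathbf{A}_\infty)$-action on $W$ to the image of $I_{S_p}$ under the embeddings involving $E \hookrightarrow \mathbf{C}$ and $\iota_p$, and confirm that the chosen pairing produces the correct side of the contragredient duality so that the summation is over $\Pi$ with $\Pi_\infty \simeq W$ rather than $\Pi_\infty \simeq W^\vee$.
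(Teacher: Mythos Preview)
Your outline is correct and is exactly the standard argument for algebraic modular forms on groups with compact archimedean points; the paper does not give its own proof here but simply cites Chapter~II of Gross's \emph{Algebraic modular forms}, which carries out precisely this identification. One small terminological point: the finiteness of $U_{\vec{n}}(F)\backslash U_{\vec{n}}(\mathbf{A}_f)/I_{S_p}K^{S_p}$ that you invoke is the finiteness of the class number (a consequence of compactness of the adelic quotient), not strong approximation in the usual sense.
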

\begin{proof}
\cite[Chapter II]{gross_algebraic_mf}
\end{proof}

 \subsection{Systems of Banach modules}
 We shall recall Buzzard's \cite{buzzard_eigenvariety} property (PR) for Banach modules and the notion of a system of Banach modules following Chenevier \cite[\S1]{chenevier_jacquet_langlands_p_adic}.  Let $A$ be a commutative noetherian $\qpbar$-Banach algebra.
 An $A$-Banach module ${M}$ is said to satisfy the \emph{property (PR)} if there exists another $A$-Banach module 
 ${M}'$ such that ${M} \oplus {M}'$ is isomorphic (but not necessarily isometrically isomorphic) to 
 an orthonormal $A$-Banach module.
 A \emph{system of (PR) $A$-Banach modules} is defined to be a set
 \[
 	\mathbf{M} = \left\{ {M}_i ; \iota_i : i \in \mathbf{N} 
	\right\}
 \]
 where for all $i \in \mathbf{N}$, 
 \begin{itemize}
 	\item $M_i$ is an $A$-Banach module satisfying the property (PR), and
	\item $\iota_i : M_i \rightarrow M_{i+1}$ is an $A$-linear compact morphism.
 \end{itemize}
 The corresponding inverse limit  is written as 
$\mathbf{M}^{\dagger} = \varprojlim M_i$.
Let $\mathcal{S}$ be a reduced separated $\qpbar$-rigid analytic space.
A \emph{sheaf of (PR) Banach modules} on $\mathcal{S}$ is a sheaf of modules $B$ on $\mathcal{S}$ such that
\begin{itemize}
	\item for all open affinoids  $V \subset \mathcal{S}$, $B(V)$ is an $\mathcal{O}(V)$-Banach module satisfying the property (PR), and
	\item for all open affinoids $V' \subset V \subset \mathcal{S}$, the base change morphism 
	\[
		B(V) \widehat{\otimes}_{\mathcal{O}(V)} \mathcal{O}(V')
			\rightarrow B(V')
	\]
	is an isomorphism of $\mathcal{O}(V')$-Banach modules.
\end{itemize}

A \emph{system of (PR) Banach modules} on $\mathcal{S}$ is a set
\[
	\mathbf{M} = \left\{
		M_i; \iota_i : i \in \mathbf{N}
	\right\}
\]
where for all $i \in \mathbf{N}$,
\begin{itemize}
	\item $M_i$ is a sheaf of (PR) Banach modules on $\mathcal{S}$, and
	\item $\iota_i : M_i \rightarrow M_{i+1}$ is a sheaf morphism such that
	for all open affinoids $V \subset \mathcal{S}$, 
	$\iota_i(V) : M_i(V) \rightarrow M_{i+1}(V)$ is an 
	$\mathcal{O}(V)$-linear compact morphism.
\end{itemize}
 If $V \subset \mathcal{S}$ is an open affinoid, then we shall denote the corresponding system of 
 (PR) $\mathcal{O}\left(V\right)$-Banach modules by
 \[
 	\mathbf{M}(V) = \left\{
		{M}_i(V); \iota_i : i \in \mathbf{N}
	\right\}.
 \]
If $x \in \mathcal{S}(\qpbar)$,
then we shall denote the corresponding system of (PR) $\qpbar$-Banach modules by 
\[
\mathbf{M}_x = \left\{ {M}_{i,x}; \iota_i : i \in \mathbf{N} \right\}
\]
 where  for $i \in \mathbf{N}$, 
${M}_{i, x} = {M}_i\left(V\right) \widehat{\otimes}_{\mathcal{O}\left(V\right)} \qpbar$  
 where   $V \subset \mathcal{S}$ is an  open affinoid neighbourhood of $x$ and the map
 $\mathcal{O}(V) \rightarrow \qpbar$ is the one given by the point $x$.
  An \emph{endomorphism} of $\mathbf{M}$ is a set
 \[
 \phi = \left\{ \phi_i(V)  :  V \subset \mathcal{S} \text{  open affinoid,  $i \in \mathbf{N}$  sufficiently large} \right\}
 \]
where each $\phi_i(V) : {M}_i(V) \rightarrow {M}_i(V)$ is a continuous 
$\mathcal{O}(V)$-linear endomorphism, and the $\phi_i(V)$ commute with both the $\iota_i$ and the 
base change morphisms between  open affinoids $V \subset V' \subset \mathcal{S}$.
%Two endomorphisms $\phi$ and $\phi'$ are said to be \emph{equivalent} if 
%for all open affinoids $V \subset \mathcal{S}$ and for all $i$ sufficiently large, $\phi_i(V) = \phi'_i(V)$.

 Let $\mathrm{Comp}(\mathbf{M}) \subset \mathrm{End}(\mathbf{M})$
 be the two-sided ideal consisting of  endomorphisms $\phi$ such that for all open affinoids $V \subset \mathcal{S}$,
 there exists for $i$ sufficiently large, a continuous $\mathcal{O}(V)$-linear morphism
 $\psi_i(V) : M_{i+1}(V) \rightarrow M_i(V)$ such that the following diagram commutes.
 \[
 \xymatrix{
 	M_i(V) \ar[d]^{\iota_i(V)} \ar[r]^{\phi_i(V)} & M_i(V) \ar[d]^{\iota_i(V)} \\
	M_{i+1}(V) \ar[ru]^{\psi_i(V)} \ar[r]^{\phi_{i+1}(V)} & M_{i+1}(V)
 }
 \]

 \subsection{Buzzard's eigenvariety machine}
 We shall consider a datum 
 $\left(\mathcal{S}, \mathbf{M}, T, \phi\right)$ where
 \begin{itemize}
 	\item $\mathcal{S}$ is a reduced separated $\qpbar$-rigid analytic space,
	\item $\mathbf{M}$ is a system of (PR) Banach modules on $\mathcal{S}$,
	\item $T$ is a commutative $\qpbar$-algebra equipped with a homomorphism
	$T \rightarrow \mathrm{End}(\mathbf{M})$, and
	\item $\phi \in T$ acts compactly on $\mathbf{M}$ (that is $\phi$ acts compactly on the ${M}_i(V)$ for all $i \in \mathbf{N}$ and for all open affinoids $V \subset \mathcal{S}$) and satisfies the compatibility condition
	$\phi \in \mathrm{Comp}(\mathbf{M})$.
 \end{itemize}
A $\qpbar$-valued \emph{system of eigenvalues} for
 $\left(\mathcal{S}, \mathbf{M}, T, \phi\right)$ is a pair $\left(\lambda, x\right)$ where
 \begin{itemize}
 	\item $x \in \mathcal{S}(\qpbar)$ and
	\item $\lambda : T \rightarrow \qpbar$ is a $\qpbar$-algebra homomorphism such that
	there exists a non-zero $m \in \mathbf{M}_x^\dagger$ for which
	$\alpha(m) = \lambda(\alpha) \cdot m$ for all $\alpha \in T$.
 \end{itemize}
 A system of eigenvalues $\left(\lambda, x\right)$ is said to be \emph{$\phi$-finite} if $\lambda(\phi) \neq 0$.
 
 \begin{theorem}
 There exists a tuple $\left(\mathcal{D}, \psi, \kappa \right)$ where
 \begin{itemize}
 	\item $\mathcal{D}$ is a $\qpbar$-rigid analytic space,
	\item $\psi : T \rightarrow \mathcal{O}(\mathcal{D})$ is a $\qpbar$-algebra homomorphism, and
	\item $\kappa : \mathcal{D} \rightarrow \mathcal{S}$ is a morphism of rigid analytic spaces
 \end{itemize}
 such that
 \begin{itemize}
 	\item the map $\nu = (\kappa, \psi(\phi)^{-1}) : \mathcal{D} \rightarrow \mathcal{S} \times \mathbf{G}^{\mathrm{rig}}_m$ is a finite morphism,
	\item for all open affinoids $V \subset \mathcal{S} \times \mathbf{G}^{\mathrm{rig}}_m$, the natural map
	\[
		\psi \widehat{\otimes} \nu^* : T \widehat{\otimes} \mathcal{O}(V) \rightarrow
			\mathcal{O}(\nu^{-1}(V))
	\]
	is surjective, and
	\item the natural evaluation map
	\[
		\mathcal{D}(\qpbar) \rightarrow \mathrm{Hom}(T, \qpbar), \ 
		x \mapsto \psi_x : (h \mapsto \psi(h)(x))
	\]
	induces a bijection $x \mapsto (\psi_x, \kappa(x))$ between the set of $\qpbar$-valued points
	of $\mathcal{D}$ and the set of $\phi$-finite $\qpbar$-valued systems of eigenvalues for
	 $\left(\mathcal{S}, \mathbf{M}, T, \phi\right)$.
 \end{itemize}
 Furthermore if $\mathcal{D}$ is reduced, then $\left(\mathcal{D}, \psi, \kappa\right)$ is uniquely determined by these properties. 
 \end{theorem}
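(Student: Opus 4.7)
The plan is to follow Buzzard's eigenvariety construction, globalising the local slope decompositions of Coleman-Serre spectral theory attached to the compact operator $\phi$. First, over any open affinoid $V \subset \mathcal{S}$, the compactness of $\phi$ acting on each $M_i(V)$ together with the property (PR) allows one to define a Fredholm characteristic series $P_V(X) \in \mathcal{O}(V)\{\{X\}\}$ for $\phi$ on $\mathbf{M}^\dagger(V)$. The compatibility condition $\phi \in \mathrm{Comp}(\mathbf{M})$ and the compactness of the transition maps $\iota_i$ guarantee that $P_V$ is independent of the level $i$ chosen (for $i$ large) and that the formation of $P_V$ commutes with base change $V' \subset V$. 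Consequently these series glue to a global Fredholm hypersurface $\mathcal{Z} \subset \mathcal{S} \times \mathbf{G}_m^{\mathrm{rig}}$ cut out by $P(X^{-1})$.

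Next, I would invoke Buzzard's admissibility theorem: $\mathcal{Z}$ admits an admissible covering by affinoid opens of the form $Y \subset W \times \mathbf{G}_m^{\mathrm{rig}}$ with $W \subset \mathcal{S}$ affinoid and $Y \to W$ finite and flat, such that the eigenvalues of $\phi^{-1}$ corresponding to $Y$ admit a separation of slopes. Over such a $Y$, Riesz-style decomposition yields a direct summand $\mathbf{M}^\dagger(W)_Y$ of $\mathbf{M}^\dagger(W)$ which is finitely generated and projective over $\mathcal{O}(W)$, stable under $T$, and on which $\phi$ acts with characteristic polynomial the monic factor of $P_W$ corresponding to $Y$. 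Define $\mathcal{T}_Y$ to be the image of $T \otimes \mathcal{O}(Y)$ in $\mathrm{End}_{\mathcal{O}(W)}(\mathbf{M}^\dagger(W)_Y)$; this is a finite commutative $\mathcal{O}(Y)$-algebra, hence affinoid. The compatibility of slope decompositions under base change ensures the $\mathcal{T}_Y$ glue to a coherent sheaf of $\mathcal{O}_{\mathcal{Z}}$-algebras $\mathcal{T}$, and I set $\mathcal{D} = \mathrm{Sp}_{\mathcal{Z}}(\mathcal{T})$ with $\kappa$ given by composing the structural map with the projection to $\mathcal{S}$ and $\psi$ the tautological map $T \to \mathcal{O}(\mathcal{D})$.

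It then remains to verify the listed properties. The map $\nu : \mathcal{D} \to \mathcal{Z} \hookrightarrow \mathcal{S} \times \mathbf{G}_m^{\mathrm{rig}}$ is finite by construction, and the surjectivity of $\psi \widehat{\otimes} \nu^\ast$ is immediate from the fact that $\mathcal{T}$ is by definition generated over $\mathcal{O}_{\mathcal{Z}}$ by the image of $T$. For the bijection on $\qpbar$-points, a point $y \in \mathcal{D}(\qpbar)$ determines $(\psi_y, \kappa(y))$, and conversely a $\phi$-finite system of eigenvalues $(\lambda, x)$ with $\lambda(\phi) \neq 0$ produces a non-zero generalised eigenspace inside a suitable slope piece $\mathbf{M}^\dagger(W)_Y$ over a neighbourhood $W$ of $x$ and a point $(x, \lambda(\phi)^{-1}) \in \mathcal{Z}$; the resulting maximal ideal of $\mathcal{T}_Y$ corresponds to a unique point of $\mathcal{D}$ lying over it. Uniqueness in the reduced case follows from the general principle that a reduced finite $\mathcal{S} \times \mathbf{G}_m^{\mathrm{rig}}$-affinoid algebra is determined by its $\qpbar$-points together with the map to the base, together with the surjectivity condition.

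The main obstacle is the gluing step: showing that the slope decompositions over different affinoids $Y$ in the admissible cover of $\mathcal{Z}$ are compatible enough to produce a well-defined coherent sheaf $\mathcal{T}$, and that the construction is functorial under base change. This depends crucially on the uniqueness of slope decompositions and on Buzzard's analysis of the admissible cover of the spectral variety; once that is in place the rest of the argument is essentially formal. The verification that every $\phi$-finite system of eigenvalues on $\mathbf{M}^\dagger_x$ actually arises from such a slope piece uses the observation that $\phi$-finiteness ($\lambda(\phi) \neq 0$) guarantees the eigenvector lies in the finite-slope part at $x$, which is precisely the union of the $\mathbf{M}^\dagger(W)_Y$ for varying $Y$ over $x$.
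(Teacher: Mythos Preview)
Your proposal is correct and follows precisely the construction the paper invokes: the paper's own proof consists solely of citations to Buzzard \cite{buzzard_eigenvariety} for existence (via the Fredholm hypersurface, its canonical admissible cover, the local finite Hecke algebras and their gluing) and to Bella\"iche--Chenevier \cite[Proposition~7.2.8]{bellaiche_chenevier_book} for uniqueness in the reduced case. Your sketch simply unpacks those references, so the approach is the same; nothing further is needed.
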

 \begin{proof}
 The eigenvariety was constructed by Buzzard \cite[\S5]{buzzard_eigenvariety} generalising an earlier construction
 of Coleman-Mazur \cite{coleman_mazur}.  The uniqueness, a formal property, was obtained by Bella\"iche-Chenevier \cite[Proposition 7.2.8]{bellaiche_chenevier_book}.
 \end{proof}
 
 \subsection{$p$-adic forms of type $\left(p, S, e, \phi\right)$}
 If $W$ is an irreducible admissible representation of $U_{\vec{n}}(\mathbf{A}_\infty)$, then
we define the $\mathcal{H}^-$-module
\[
	S^{\mathrm{cl}}_{\kappa\left(W\right)} = e \mathcal{A}\left(U_{\vec{n}}, W\right).
\]
Chenevier \cite[\S2]{chenevier_application_varieties_hecke} has constructed a system of (PR) Banach modules on the weight space $\mathcal{W}$ that interpolate the spaces $S^{\mathrm{cl}}_{\kappa\left(W\right)}$.  
The \emph{system of $p$-adic forms} of type $\left(p, S,e, \phi\right)$, denoted $\mathbf{S} = \left\{S_i; \iota_i : i \in \mathbf{N}\right\}$, satisfies the following properties.
\begin{itemize}
\item There exists a $\qpbar$-algebra homomorphism $\mathcal{H}^- \rightarrow \mathrm{End}(\mathbf{S})$.
\item If $t \in \qpbar[T^{--}_{S_p}]$, then $t$ viewed as an element of $\mathcal{H}^-$ acts compactly on $\mathbf{S}$ and satisfies the compatibility condition $t \in \mathrm{Comp}(\mathbf{S})$.
\item For all $W$, there exists a natural embedding of $\mathcal{H}^-$-modules,
	\[
		{S}_{\kappa(W)}^{\mathrm{cl}} \otimes \kappa(W) \hookrightarrow \mathbf{S}^\dagger_{\kappa(W)}
	\]
	where $\kappa(W): T_{S_p} \rightarrow \qpbar^\times$ is viewed by restriction as a character of $T_{S_p}^{\overline{\omega}, -}$.
	Chenevier \cite[Proposition 2.17]{chenevier_application_varieties_hecke} also obtains a \emph{small slope is classical} type result.
	 This
	allows us to deduce that certain $p$-adic forms $f \in \mathbf{S}^\dagger_{\kappa(W)}$ are classical,
	that is $f$ lies in the image of the above embedding.
\end{itemize}
 
 \subsection{The eigenvariety of type $\left(p, S, e, \phi\right)$}
Let $\pi$ be an automorphic representation of $U_{\vec{n}}(\mathbf{A}_F)$ such that $e(\pi_f) \neq 0$.
The complex vector space $(\pi_f^{S \cup S_p})^{K^{S \cup S_p}}$ is  $1$-dimensional and $\mathcal{H}_{\mathrm{ur}}$
acts on this space via scalar multiplication.  We shall write
\[
	\psi_{\mathrm{ur}}(\pi) : \mathcal{H}_{\mathrm{ur}} \rightarrow \qpbar
\]
for the corresponding  homomorphism composed with $\iota_p$.  
Let 
\[
\mathcal{Z} \subset \mathrm{Hom}(\mathcal{H}^-, \qpbar) \times \mathcal{W}(\qpbar)
\]
denote the subset of pairs $(\nu(\pi, \chi) \otimes \psi_{\mathrm{ur}}(\pi), \kappa(\pi_\infty))$ where $\pi$ runs through the  automorphic representations of the above form and $\chi$ the  accessible refinements of $\pi$.
 
\begin{theorem}
\label{theorem_existence_eigenvariety}
There exists a unique  tuple  $\left(\mathcal{D}, \psi, \kappa, Z\right)$ where
\begin{itemize}
	\item $\mathcal{D}$ is a reduced rigid analytic space over $\qpbar$,
	\item $\psi : \mathcal{H}^- \rightarrow \mathcal{O}(D)$ is a $\qpbar$-algebra homomorphism,
	\item $\kappa : \mathcal{D} \rightarrow \mathcal{W}$ is a morphism of rigid analytic spaces, and
	\item $Z \subset \mathcal{D}(\qpbar)$ is an accumulation and Zariski-dense subset
	(cf. \cite[\S3.3.1]{bellaiche_chenevier_book})
\end{itemize}
 such that
 \begin{itemize}
 	\item the map $\nu = (\kappa, \psi(\phi)^{-1}) : \mathcal{D} \rightarrow \mathcal{W} \times \mathbf{G}^{\mathrm{rig}}_m$ is a finite morphism,
	\item for all open affinoids $V \subset \mathcal{W} \times \mathbf{G}^{\mathrm{rig}}_m$, the natural map
	\[
		\psi \widehat{\otimes} \nu^* : \mathcal{H}^- \widehat{\otimes} \mathcal{O}(V) \rightarrow
			\mathcal{O}(\nu^{-1}(V))
	\]
	is surjective, and
	\item the natural evaluation map 
	$\mathcal{D}(\qpbar) \rightarrow \mathrm{Hom}_{\mathrm{ring}}(\mathcal{H}^-, \qpbar)$
	\[
		x \mapsto \psi_x : (h \mapsto \psi(h)(x))
	\]
	induces a bijection $Z \stackrel{\sim}{\rightarrow} \mathcal{Z}$, $z\mapsto \left(\psi_z, \kappa(z)\right)$.
\end{itemize}
In addition, it also satisfies the following properties.
\begin{itemize}
	\item $\mathcal{D}$ is equidimensional of dimension $\dim(\mathcal{W}) = n [F : \mathbf{Q}]$.
	More precisely, 
	 $\mathcal{D}$ has a canonical admissible covering which is given by the open affinoids $\Omega \subset \mathcal{D}$ such that $\kappa(\Omega)$ is an open affinoid and the morphism
	 $\kappa|_{\Omega} : \Omega \rightarrow \kappa(\Omega)$ is finite and surjective when restricted to each irreducible component of $\Omega$.  Furthermore, the image by $\kappa$ of each irreducible component of $\mathcal{D}$ is Zariski-open in $\mathcal{W}$.
	 \item $\psi(\mathcal{H}_{\mathrm{ur}}) \subset  \mathcal{O}(\mathcal{D})^{\leq 1}$ where
	 	$\mathcal{O}(\mathcal{D})^{\leq 1} \subset \mathcal{O}(\mathcal{D})$ denotes the subring
		of  functions bounded by $1$.
\end{itemize}

\end{theorem}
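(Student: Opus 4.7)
The plan is to apply Buzzard's eigenvariety machine to the datum $(\mathcal{W}, \mathbf{S}, \mathcal{H}^-, \phi)$. All the hypotheses of the machine are guaranteed by the construction of the system of $p$-adic forms recalled earlier: $\mathcal{W}$ is a reduced separated $\qpbar$-rigid analytic space, $\mathbf{S}$ is a system of (PR) Banach modules on $\mathcal{W}$ equipped with a $\qpbar$-algebra homomorphism $\mathcal{H}^- \to \mathrm{End}(\mathbf{S})$, and $\phi \in \qpbar[T_{S_p}^{--}]$ acts compactly on $\mathbf{S}$ and lies in $\mathrm{Comp}(\mathbf{S})$. This directly furnishes a tuple $(\mathcal{D}_0, \psi, \kappa)$ with a finite morphism $\nu$ to $\mathcal{W} \times \mathbf{G}_m^{\mathrm{rig}}$, the surjectivity of $\psi \,\widehat{\otimes}\, \nu^*$ on affinoids, and the bijection between $\mathcal{D}_0(\qpbar)$ and the set of $\phi$-finite systems of eigenvalues for $(\mathcal{W}, \mathbf{S}, \mathcal{H}^-, \phi)$.

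Next I would identify the subset $Z \subset \mathcal{D}_0(\qpbar)$ of classical points as the preimage of $\mathcal{Z}$ under the above bijection. By the natural embedding $S^{\mathrm{cl}}_{\kappa(W)} \otimes \kappa(W) \hookrightarrow \mathbf{S}^\dagger_{\kappa(W)}$, together with Lemma \ref{lemma_description_of_module_space_classical_terms_automorphic_representations} interpreting the classical space in terms of automorphic representations and Lemma \ref{lemma_bernstein_borel_casselman_matsumoto} decomposing $(\pi_{S_p}^{I_{S_p}})^{\mathbf{ss}}$ as a sum over accessible refinements, every automorphic representation $\pi$ with $e(\pi_f) \neq 0$ and $\pi_\infty \simeq W$ contributes one $\mathcal{H}^-$-eigensystem per accessible refinement $\chi$. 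A direct computation of the action of $\mathfrak{U}^-$ and $\mathcal{H}_{\mathrm{ur}}$ on such an eigenvector shows that the corresponding point of $\mathcal{D}_0(\qpbar)$ is precisely $(\nu(\pi, \chi) \otimes \psi_{\mathrm{ur}}(\pi), \kappa(\pi_\infty)) \in \mathcal{Z}$; the modulus twist in the definition of $\nu(\pi, \chi)$ accounts for the normalisation in the Bernstein--Borel--Casselman--Matsumoto formula, and the restriction of $\kappa(\pi_\infty)$ to $T_{S_p}^0$ gives the weight component.

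The next step is to establish accumulation and Zariski-density of $Z$. Here I invoke Chenevier's small-slope-is-classical result (\cite[Proposition 2.17]{chenevier_application_varieties_hecke}): regular classical weights are Zariski-dense and accumulating in $\mathcal{W}$, and over any such weight all eigenforms whose $\phi$-slope is sufficiently small (relative to the weight) lie in the image of $S^{\mathrm{cl}}_{\kappa(W)} \otimes \kappa(W)$. Combined with the finiteness of $\nu$ and the standard slope-decomposition argument of Coleman, this ensures that every point of $\mathcal{D}_0$ is accumulated by classical points and that $Z$ is Zariski-dense. One then sets $\mathcal{D} = \mathcal{D}_0^{\mathrm{red}}$; since all classical points are already reduced, the properties listed carry over, and the uniqueness of $(\mathcal{D}, \psi, \kappa)$ follows from the uniqueness clause of Buzzard's machine applied to reduced eigenvarieties (equivalently from \cite[Proposition 7.2.8]{bellaiche_chenevier_book} using that $Z$ is Zariski-dense).

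Finally, equidimensionality and the admissible covering by affinoids $\Omega$ with $\kappa|_\Omega$ finite surjective onto $\kappa(\Omega)$ on each irreducible component is a formal consequence of the finiteness of $\nu$ and the equidimensionality of $\mathcal{W}$ (of dimension $n[F:\mathbf{Q}]$); that each irreducible component of $\mathcal{D}$ dominates an open in $\mathcal{W}$ follows from the accumulation of $Z$. The integrality $\psi(\mathcal{H}_{\mathrm{ur}}) \subset \mathcal{O}(\mathcal{D})^{\leq 1}$ reduces to the statement that the spherical Hecke operators preserve the unit ball in each $S_i(V)$, which in turn follows from the fact that, outside of $S \cup S_p$, these operators are averages over cosets of a maximal hyperspecial subgroup and therefore act by integral matrices on any integral model of the Banach modules. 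The main technical obstacle is the accumulation/density step, but this is exactly the content of Chenevier's work, so the proof reduces to quoting \cite[\S2]{chenevier_application_varieties_hecke}.
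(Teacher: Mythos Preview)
Your proposal is correct and follows essentially the same approach as the paper: apply Buzzard's machine to $(\mathcal{W}, \mathbf{S}, \mathcal{H}^-, \phi)$, obtain density and accumulation of classical points from Chenevier's small-slope-is-classical result, and read off the remaining structural properties. The only noteworthy divergence is in the reducedness step: you pass to $\mathcal{D}_0^{\mathrm{red}}$ by hand, whereas the paper invokes \cite[Proposition 3.9]{chenevier_jacquet_langlands_p_adic}, which shows directly that the eigenvariety produced by the machine is already reduced (using semisimplicity of the Hecke action at a Zariski-dense set of classical points). Both routes are valid, but the paper's citation is slightly cleaner since it avoids having to re-verify that the defining properties descend to the nilreduction.
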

\begin{proof}
The eigenvariety is constructed
by applying the eigenvariety machine to the $p$-adic forms of type $\left(p, S, e, \phi\right)$ (cf. \cite[Theorem 1.6]{chenevier_application_varieties_hecke}).  
We remark that the density of classical points follows from a small slope is classical type result
(cf. \cite[Proposition 2.17]{chenevier_application_varieties_hecke}) whilst
the fact that $\mathcal{D}$ is reduced follows from a result of Chenevier \cite[Proposition 3.9]{chenevier_jacquet_langlands_p_adic}. 
\end{proof}

 \section{$p$-adic Langlands functoriality: definitions}
 \label{section_p_adic_funct_def}
We shall generalise the notion of Langlands functoriality to the setting of eigenvarieties.
The setup is as follows.
Let $H=U_{\vec{n}}$, and $G=U_{{m}}$ where $\vec{n} = \left(n_1,\ldots, n_r\right)$ and $m \in \mathbf{N}$.
Let $\left(p, S, e_H, \phi_H\right)$ and $\left(p, S, e_G, \phi_G\right)$ be data.  (Concerning notation, we shall add a subscript  $H$ or $G$ to previously defined   objects  to indicate the group to which they are associated.)  Let
\[
	\xi : {}^L H \rightarrow {}^L G
\]
be an $L$-homomorphism.  Recall that ${}^L H = \widehat{H} \rtimes W_F$ where
$\widehat{H} = GL_{n_1} \times\cdots \times GL_{n_r}(\mathbf{C})$ and the Weil group $W_F$ acts via projection onto
$\mathrm{Gal}(E/F) = \left\{1,c\right\}$ where $c$ acts via the isomorphism
\begin{eqnarray*}
c : GL_{n_1} \times \cdots \times GL_{n_r}(\mathbf{C}) &\rightarrow& GL_{n_1}\times \cdots \times GL_{n_r}(\mathbf{C})  \\
 g_1 \times \cdots \times g_r &\mapsto& \Phi_{n_1} {}^t g_1^{-1} \Phi_{n_1}^{-1} \times \cdots \times \Phi_{n_r} {}^t g_r^{-1} \Phi_{n_r}^{-1}
\end{eqnarray*}
where
\[
\Phi_t =
\begin{pmatrix}
0 & \cdots &0& 1	\\
0 & \cdots & -1 & 0 \\
 \vdots & \vdots & \vdots & \vdots \\
\left(-1\right)^{t-1} & \cdots & 0 & 0
\end{pmatrix}.
\]

\subsection{Unramified places}
If $\nu \not\in S$ is a non-archimedean place of $F$, then $H_\nu$ and $G_\nu$ are unramified groups and $\xi$ induces a map from the $K_{H,\nu}$-unramified representations of $H(F_\nu)$ to the $K_{G,\nu}$-unramified representations of $G(F_\nu)$.  Dual to this transfer, there exists a morphism of  spherical Hecke algebras 
(cf. \cite[\S2.7]{minguez})
\[
	\xi^*_{\mathrm{ur}, \nu} : \mathcal{C}_c^\infty(K_{G,\nu} \backslash G(F_\nu)/K_{G,\nu}, \qpbar)
	\rightarrow \mathcal{C}_c^\infty(K_{H,\nu} \backslash H(F_\nu)/K_{H,\nu}, \qpbar).
\]
Combining these morphisms, we obtain the morphism of spherical Hecke algebras
\[
	\xi^*_{\mathrm{ur}} = \otimes_{\nu\not\in S \cup S_p} \xi^*_{\mathrm{ur}, \nu} : \mathcal{H}_{G, \mathrm{ur}}   \rightarrow \mathcal{H}_{H,\mathrm{ur}}.
\]	
 
 \subsection{Places in $S_p$}
 Let $\nu \in S_p$. 
 We shall slightly modify the construction of the morphism of the spherical Hecke algebras (cf. \cite[\S2]{minguez})
 to obtain a map of refinements. 
 
The $\qpbar$-valued characters of 
 $T_{H,\nu}/ T^0_{H,\nu}$ (resp. $T_{G,\nu}/ T^0_{G,\nu}$) are naturally parameterised by the $\qpbar$-valued points
 of $\widehat{T}_{H,\nu}$ (resp. $\widehat{T}_{G,\nu}$) where
 $\widehat{T}_{H,\nu}$ (resp. $\widehat{T}_{G,\nu}$) denotes the dual torus of $T_{H,\nu}$ (resp. $T_{G,\nu}$).
 This is seen via the following chain of canonical bijections
 \begin{eqnarray*}
 \widehat{T}_{H,\nu}(\qpbar) &=& \mathrm{Hom}( X^*(\widehat{T}_{H,\nu}), \qpbar^\times )	\\
 		&=&  \mathrm{Hom}( X_*({T}_{H,\nu}), \qpbar^\times)	\\
		&=& \mathrm{Hom}(T_{H,\nu}/T^0_{H,\nu}, \qpbar^\times) \\
		&=& \mathrm{Hom}(T_{H,\nu}^{\overline{\omega}}, \qpbar^\times) 
 \end{eqnarray*}
 where $X^*$ (resp. $X_*$) denotes the group of algebraic characters (resp. algebraic co-characters) of the corresponding algebraic torus.  
 The first bijection is simply  the definition of the $\qpbar$-points of
$\widehat{T}_{H,\nu}$.  
The second bijection follows from the canonical bijection $X^*(\widehat{T}_{H,\nu}) = X_*(T_{H,\nu})$.
The third bijection is induced from the canonical bijection
\begin{eqnarray*}
	X_*(T_{H,\nu}) &\rightarrow& T_{H,\nu}/T_{H,\nu}^0	\\
	\alpha^\vee &\mapsto& \alpha^\vee(\overline{\omega}).
\end{eqnarray*}
The fourth bijection follows from the canonical bijection $T_{H,\nu}/T^0_{H,\nu} \simeq T_{H,\nu}^{\overline{\omega}}$.
  
The $L$-homomorphism $\xi : {}^L H \rightarrow {}^L G$ restricts to give a map
$\xi: \widehat{H} \times \mathcal{F}_\nu \rightarrow \widehat{G} \times \mathcal{F}_\nu$
where $\mathcal{F}_\nu$ refers to the geometric Frobenius element of $W_{F_\nu}$.
 This induces a map
 \begin{equation*}
 	 \widehat{T}_{H,\nu}\left(\qpbar\right)  \rightarrow
		\widehat{T}_{G,\nu}\left(\qpbar\right) .
 \end{equation*}
This map is canonical up to composing with an isomorphism of the form
\begin{eqnarray*}
\widehat{T}_{G,\nu}(\qpbar) &\rightarrow& \widehat{T}_{G,\nu}(\qpbar) 	\\
\diag(x_1,\ldots, x_m) &\mapsto& \diag(x_{\sigma(1)}, \ldots, x_{\sigma(m)})	\\
\end{eqnarray*}
where $\sigma \in \mathfrak{S}_m$ is a permutation.

 Dual to the map  $\widehat{T}_{H,\nu}\left(\qpbar\right)  \rightarrow \widehat{T}_{G,\nu}\left(\qpbar\right)$,  we have a morphism of $\qpbar$-algebras
   \[
 	\mathcal{R}_{\nu} :  \qpbar[T_{G,\nu}^{\overline{\omega}}] \rightarrow \qpbar[T_{H,\nu}^{\overline{\omega}}]
 \]
 which is canonical up to precomposing with an isomorphism of the form
\begin{eqnarray*}
\iota_\sigma : {T}_{G,\nu} &\rightarrow& {T}_{G,\nu} 	\\
\diag(x_1,\ldots, x_m) &\mapsto& \diag(x_{\sigma(1)}, \ldots, x_{\sigma(m)})	\\
\end{eqnarray*}
where $\sigma \in \mathfrak{S}_m$ is a permutation.
We shall refer to $\mathcal{R}_\nu$ as the \emph{refinement map}.
 \begin{remark}
 The reason that the morphism $\mathcal{R}_\nu$ is non-canonical
 is due to  the fact that we are working at the level of refinements whilst the classical Langlands correspondence
 operates at the level of representations.   Consequently in order to obtain a map of refinements, one is obliged to specify an ordering of the refinement map.
 Two such maps differ by $\iota_\sigma$ for 
 a choice of $\sigma \in \mathfrak{S}_m$.
 \end{remark}
  \begin{lemma}
 Let $\pi_{H,\nu}$ and $\pi_{G,\nu}$ be irreducible admissible representations of $H\left(F_\nu\right)$ and
 $G\left(F_\nu\right)$ respectively.
 Assume that
   $\pi_{H,\nu}^{I_{H,\nu}} \neq 0$, $\pi_{G,\nu}^{I_{G,\nu}} \neq 0$, and
 $\pi_{G,\nu}$ is the Langlands functorial transfer of  $\pi_{H,\nu}$ via the $L$-homomorphism $\xi$.
 If $\chi_{H,\nu} : T_{H,\nu}^{\overline{\omega}} \simeq T_{H,\nu}/T^0_{H,\nu} \rightarrow \qpbar^\times$ is a refinement of $\pi_{H,\nu}$, then $\chi_{H,\nu} \circ \mathcal{R}_\nu: T_{G,\nu}^{\overline{\omega}} \simeq T_{G,\nu}/T^0_{G,\nu} \rightarrow \qpbar^\times$ is a refinement of $\pi_{G,\nu}$.
 \end{lemma}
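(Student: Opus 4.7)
The plan is to translate the statement to the Langlands dual side using the chain of canonical bijections displayed before the definition of $\mathcal{R}_\nu$. Under these bijections, an unramified character $\chi_{H,\nu} : T_{H,\nu}^{\overline{\omega}} \to \qpbar^\times$ corresponds to a point $t(\chi_{H,\nu}) \in \widehat{T}_{H,\nu}(\qpbar)$, and similarly for $G$. By construction the refinement map $\mathcal{R}_\nu$ is dual to the induced map on tori $\xi : \widehat{T}_{H,\nu}(\qpbar) \to \widehat{T}_{G,\nu}(\qpbar)$, so the character $\chi_{H,\nu}\circ \mathcal{R}_\nu$ corresponds to the point $\xi\bigl(t(\chi_{H,\nu})\bigr) \in \widehat{T}_{G,\nu}(\qpbar)$. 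This reduces the statement to a comparison of semi-simple conjugacy classes in $\widehat{H}$ and $\widehat{G}$.

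Next I would invoke the interpretation of refinements recalled in the remark following Lemma \ref{lemma_bernstein_borel_casselman_matsumoto}: for an Iwahori-spherical representation $\pi_{H,\nu}$, giving a refinement is equivalent to giving an ordering of the eigenvalues of the semi-simple conjugacy class $c(\pi_{H,\nu}) \in \widehat{H}(\qpbar)$ attached to $\pi_{H,\nu}$ by local Langlands. Concretely, this says that $t(\chi_{H,\nu})$ is a lift of $c(\pi_{H,\nu})$ along the quotient $\widehat{T}_{H,\nu} \hookrightarrow \widehat{H} \twoheadrightarrow \widehat{H}/\!/\widehat{H}$. The hypothesis that $\pi_{G,\nu}$ is the Langlands functorial transfer of $\pi_{H,\nu}$ via $\xi$ means exactly that $c(\pi_{G,\nu})$ is the image of $c(\pi_{H,\nu})$ under the map $\widehat{H}/\!/\widehat{H} \to \widehat{G}/\!/\widehat{G}$ induced by $\xi$. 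Composing, we find that $\xi\bigl(t(\chi_{H,\nu})\bigr)$ maps to $c(\pi_{G,\nu})$ in $\widehat{G}/\!/\widehat{G}$, i.e.\ it is an ordering of the eigenvalues of $c(\pi_{G,\nu})$. Applying the same equivalence in the opposite direction for $G$, this says precisely that $\chi_{H,\nu}\circ \mathcal{R}_\nu$ is a refinement of $\pi_{G,\nu}$.

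The main obstacle is really only bookkeeping rather than mathematical: I must check that the commutative diagram
\[
\xymatrix{
\widehat{T}_{H,\nu}(\qpbar) \ar[r]^{\xi} \ar[d] & \widehat{T}_{G,\nu}(\qpbar) \ar[d] \\
\widehat{H}(\qpbar)/\!/\widehat{H} \ar[r]^{\xi} & \widehat{G}(\qpbar)/\!/\widehat{G}
}
\]
is compatible with the identifications used to define the chain of bijections (in particular with the choice of the cocharacter $\alpha^\vee \mapsto \alpha^\vee(\overline{\omega})$ and with the normalisation of local Langlands by $\delta_{B_\nu}^{-1/2}$), and that this compatibility is independent of the non-canonical permutation $\iota_\sigma$ appearing in the definition of $\mathcal{R}_\nu$ — which is automatic, since different choices of $\sigma$ permute the eigenvalue ordering but leave the underlying conjugacy class in $\widehat{G}$ unchanged.
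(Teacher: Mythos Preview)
Your proposal is correct and follows essentially the same approach as the paper's proof. The paper carries out the identical argument in explicit coordinates---writing out the diagonal representatives $\mathrm{diag}(\chi_{H,i,\nu}(\overline{\omega},1,\ldots,1),\ldots)$ of the Frobenius conjugacy classes and appealing to the construction of $\mathcal{R}_\nu$ as a modification of the spherical Hecke algebra morphism---rather than phrasing things via the quotient $\widehat{G}/\!/\widehat{G}$, but the underlying idea (refinements are lifts to the dual torus of the Langlands conjugacy class, and $\mathcal{R}_\nu$ is dual to $\xi$ on tori) is the same.
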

   \begin{remark}
 We stress that there is no claim that accessible refinements are mapped to accessible refinements.  Such a claim would be false in general.
 \end{remark}
 \begin{proof}
Writing as follows the unramified character
 \[
 \chi_{H, \nu} = \chi_{H, 1, \nu} \times \cdots \chi_{H, r, \nu} : T^{\overline{\omega}}_{GL_{n_1}, \nu} \times \cdots \times T^{\overline{\omega}}_{GL_{n_r}, \nu}
 	\rightarrow \qpbar^\times,
 \]
we observe that the semi-simple conjugacy class of $GL_{n_1} \times \cdots \times GL_{n_r}(\qpbar)$ corresponding
 to
 the geometric Frobenius 
  via the local Langlands correspondence
 for $\pi_{H,\nu}$ is represented by the element
 \[
\mathrm{diag}(\chi_{H,1,\nu}({\overline{\omega}}, 1, \ldots, 1), \ldots, \chi_{H,1,\nu}(1,\ldots, 1, \overline{\omega})) \times
\cdots \times
\mathrm{diag}(\chi_{H, r,\nu}({\overline{\omega}}, 1, \ldots, 1), \ldots, \chi_{H, r,\nu}(1,\ldots, 1, \overline{\omega})). 
 \]
 Our construction of the morphism $\mathcal{R}_\nu$ is based upon a slight modification of the
construction of the 
corresponding morphism of the spherical Hecke algebras $\xi^*_{\mathrm{ur}, \nu}$ (cf. \cite[\S2]{minguez}).  
It follows that the semi-simple conjugacy class of 
$GL_m(\qpbar)$
corresponding
 to
 the geometric Frobenius 
  via the local Langlands correspondence
 for $\pi_{G, \nu}$ is represented by the element
 \[
\mathrm{diag}(\chi_{G, \nu}({\overline{\omega}}, 1, \ldots, 1), \ldots, \chi_{G, \nu}(1,\ldots, 1, \overline{\omega}))
 \]
where $\chi_{G,\nu} =   \chi_{H,\nu} \circ \mathcal{R}_\nu: T^{\overline{\omega}}_{GL_{m}, \nu} \rightarrow \qpbar^\times$.
That is $\chi_{G, \nu}$ is a refinement of $\pi_{G,\nu}$.
 \end{proof}
  It will be convenient for  reasons of algebraicity arising in the construction of the eigenvariety to renormalise the morphism as follows
  \begin{eqnarray*}
   	\mathcal{R}'_\nu : \qpbar[T_{G,\nu}^{\overline{\omega}}] &\rightarrow& \qpbar[T_{H,\nu}^{\overline{\omega}}] 	\\
	t & \mapsto&  \mathcal{R}_\nu(t) \cdot \delta^{-1/2}_{B_{G,\nu}}(t) \cdot 
			(\delta^{1/2}_{B_{H,\nu}} \circ \mathcal{R}_\nu)(t).
  \end{eqnarray*}
  The effect of the normalisation is described by the following trivial lemma.
    \begin{lemma}
 If $\chi_{H,\nu} : T_{H,\nu}^{\overline{\omega}} \rightarrow \qpbar^\times$ is a homomorphism,  then
 the following maps are equal
 \[
  (\chi_{H,\nu} \cdot \delta_{B_{H,\nu}}^{-1/2}) \circ \mathcal{R}'_\nu  =  
 	(\chi_{H, \nu} \circ \mathcal{R}_\nu) \cdot \delta_{B_{G, \nu}}^{-1/2}
 : T^{\overline{\omega}}_{G,\nu} \rightarrow \qpbar^\times.
 \]
 \end{lemma}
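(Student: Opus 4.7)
The proof is a direct unfolding of the definition of $\mathcal{R}'_\nu$, so I would organize it as a short calculation evaluated at a generator $t \in T_{G,\nu}^{\overline{\omega}}$ of the group ring.

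First I would fix the convention: since $\chi_{H,\nu}$ and $\delta_{B_{H,\nu}}$ are characters of $T_{H,\nu}^{\overline{\omega}}$, I extend them $\qpbar$-linearly to algebra homomorphisms $\qpbar[T_{H,\nu}^{\overline{\omega}}] \to \qpbar$, so that composing with $\mathcal{R}_\nu$ (resp.\ $\mathcal{R}'_\nu$) makes sense and yields a character of $T_{G,\nu}^{\overline{\omega}}$. Observe that for $t \in T_{G,\nu}^{\overline{\omega}}$, the element $\mathcal{R}_\nu(t)$ lies in $T_{H,\nu}^{\overline{\omega}} \subset \qpbar[T_{H,\nu}^{\overline{\omega}}]$ (because $\mathcal{R}_\nu$ is induced by the map of tori dual to $\xi|_{\widehat T_{H,\nu}}$), so the scalar factors $\delta^{-1/2}_{B_{G,\nu}}(t)$ and $\delta^{1/2}_{B_{H,\nu}}(\mathcal{R}_\nu(t))$ appearing in the definition of $\mathcal{R}'_\nu(t)$ are genuine elements of $\qpbar^\times$.

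Next I would substitute the defining formula of $\mathcal{R}'_\nu$ into the left-hand side. Using linearity of $\chi_{H,\nu}\cdot\delta_{B_{H,\nu}}^{-1/2}$ over $\qpbar$, the two scalar factors pull out:
\[
(\chi_{H,\nu}\cdot\delta_{B_{H,\nu}}^{-1/2})\circ\mathcal{R}'_\nu(t)
= \delta^{-1/2}_{B_{G,\nu}}(t)\cdot \delta^{1/2}_{B_{H,\nu}}(\mathcal{R}_\nu(t)) \cdot (\chi_{H,\nu}\cdot\delta_{B_{H,\nu}}^{-1/2})(\mathcal{R}_\nu(t)).
\]
The $\delta_{B_{H,\nu}}^{\pm 1/2}(\mathcal{R}_\nu(t))$ factors cancel, leaving
\[
\delta^{-1/2}_{B_{G,\nu}}(t)\cdot \chi_{H,\nu}(\mathcal{R}_\nu(t)) = \bigl((\chi_{H,\nu}\circ\mathcal{R}_\nu)\cdot \delta^{-1/2}_{B_{G,\nu}}\bigr)(t),
\]
which is exactly the right-hand side. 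Since both sides are $\qpbar$-algebra homomorphisms agreeing on the generating set $T_{G,\nu}^{\overline{\omega}}$, the equality of characters follows.

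There is no real obstacle here; the only thing to be careful about is to interpret the symbol $\mathcal{R}'_\nu(t)$ correctly as an element of $\qpbar[T_{H,\nu}^{\overline{\omega}}]$ (a scalar multiple of $\mathcal{R}_\nu(t)$, not itself an element of $T_{H,\nu}^{\overline{\omega}}$), and to remember that the characters on the right must be extended linearly before one can apply them to that scalar multiple. Once this is set up, the cancellation of the $\delta_{B_{H,\nu}}$-factors is what makes the normalization $\mathcal{R}'_\nu$ work by design.
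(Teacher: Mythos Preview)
Your proposal is correct and matches the paper's treatment: the paper labels this a ``trivial lemma'' and gives no proof, and your direct unwinding of the definition of $\mathcal{R}'_\nu$ followed by cancellation of the $\delta_{B_{H,\nu}}^{\pm 1/2}$ factors is exactly the intended verification.
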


 \subsection{Weight space map}
It remains to study the behaviour at archimedean places.
  We shall 
 assume that our $L$-homomorphism $\xi$ satisfies the following hypothesis, which includes
 a compatibility condition between the behaviour of $\xi$ at the non-archimedean places in $S_p$ and
 the archimedean places.
 
 \begin{hypothesis}
   \label{hypothesis_1}
If $\nu \in S_p$, then there exist  $\qpbar$-algebra homomorphisms
\begin{eqnarray*}
	\Xi^*_{\mathcal{W}, \nu} : \qpbar[T_{G,\nu}] \rightarrow \qpbar[T_{H,\nu}]	\\
	\Lambda^*_{\nu} : \qpbar[T^{\overline{\omega}, -}_{G,\nu}] \rightarrow \qpbar[T^{\overline{\omega}, -}_{H,\nu}]
\end{eqnarray*}
such that the following conditions are satisfied.
\begin{itemize}
	\item  
	Let $\omega =\iota_p^{-1} \nu$ be the archimedean place corresponding to $\nu$.  Let $\pi_{H,\omega}$, and
		$\pi_{G, \omega}$ be irreducible admissible representations of $H(F_\omega)$ and $G(F_\omega)$ respectively such that $\pi_{G,\omega}$  corresponds to $\pi_{H, \omega}$ via  Langlands functoriality for $\xi$.
		Then there exists a $\sigma \in \mathfrak{S}_m$ (depending upon $\pi_{G,\omega}$ and $\pi_{H,\omega}$) such that the 
		following two characters are equal
\[
		\kappa(\pi_{G, \omega}) =
		 \kappa(\pi_{H, \omega})  \circ \Xi^*_{\mathcal{W}, \nu} \circ \iota_\sigma: T_{G,\nu} \rightarrow \qpbar^\times.	\\
\]	
	Furthermore
	$\Xi^*_{\mathcal{W},\nu}$ restricts to give a $\mathbf{Z}_p$-algebra homomorphism
		\[
	\Xi^*_{\mathcal{W}, \nu}: \mathbf{Z}_p[T^0_{G,\nu}] \rightarrow \mathbf{Z}_p[T^0_{H,\nu}].
	\]
	
	\item 
	$\Xi^*_{\mathcal{W},\nu}$ restricts to give a $\qpbar$-algebra homomorphism 
			\[
	\Xi^*_{\mathcal{W}, \nu}: \qpbar[T^{\overline{\omega}}_{G,\nu}] \rightarrow \qpbar[T^{\overline{\omega}}_{H,\nu}]
	\]
	such that
	for all homomorphisms  $\chi, \delta : T^{\overline{\omega},-}_{H, \nu} \rightarrow \qpbar^\times$, 
	\[
	(\chi \cdot \delta)  \circ \Lambda^*_\nu = (\chi \circ \mathcal{R}'_\nu) \cdot (\delta \circ \Xi^*_{\mathcal{W},\nu}): T^{\overline{\omega}, -}_{G, \nu} \rightarrow \qpbar^\times.
	\]
\end{itemize}
 \end{hypothesis}

 If $\nu \in S_p$, then the morphism $\Xi^*_{\mathcal{W}, \nu}$ induces a $\mathbf{Z}_p$-algebra homomorphism 
  \[
  	\Xi^*_{\mathcal{W}, \nu} : \mathbf{Z}_p[[T^0_{G,\nu}]] \rightarrow \mathbf{Z}_p[[T^0_{H,\nu}]].
  \]
 This induces a $\qp$-rigid analytic morphism (cf. \cite[\S7]{dejong_article_ihes})  
  \[
 	\Xi_{\mathcal{W}, \nu} : \mathcal{W}_{H,\nu}  \rightarrow \mathcal{W}_{G,\nu}.
 \]
 Together these morphisms induce a $\qp$-rigid analytic morphism of the weight spaces
 \[
 	\Xi_{\mathcal{W}} = \prod_{\nu\in S_p} \Xi_{\mathcal{W},\nu} : \mathcal{W}_H \rightarrow \mathcal{W}_G.
 \]
 We also note that the morphisms $\Lambda^*_{\nu} : \qpbar[T^{\overline{\omega}, -}_{G,\nu}] \rightarrow \qpbar[T^{\overline{\omega}, -}_{H,\nu}]$ 
 where $\nu \in S_p$
 induce morphisms of the Atkin-Lehner algebras
 \begin{eqnarray*}
 	\Lambda^*_{\nu} : \mathfrak{U}^-_{G,\nu} \rightarrow \mathfrak{U}^-_{H,\nu}	\\
	 	\Lambda^* = \otimes_{\nu\in S_p} \Lambda^*_\nu : \mathfrak{U}^-_{G} \rightarrow \mathfrak{U}^-_{H}.	\\
 \end{eqnarray*}

\subsection{Definitions} 
 We are now in a position to  introduce a notion of $p$-adic functoriality.
 \begin{definition}
 A rigid analytic morphism
 \[
 	\Xi : \mathcal{D}_H \rightarrow \mathcal{D}_G
 \]
 is said to be the \emph{$p$-adic Langlands functoriality morphism} for the tuple  $(\xi, \Xi_{\mathcal{W}}, \Lambda^*)$
 (or simply $\xi$ if the context is clear) if 
 the following diagrams commute.
 \[
 \xymatrix{
\mathcal{D}_H \ar[d]^{\kappa_H}  \ar[r]^{\Xi} &  \mathcal{D}_G \ar[d]^{\kappa_G}	 & 
	 \mathcal{H}^-_{G} \ar[d]^{\psi_G}  \ar[r]^{ \Lambda^* \otimes \xi^*_{\mathrm{ur}}} &  \mathcal{H}^-_{H} \ar[d]^{\psi_H}	  \\
\mathcal{W}_H \ar[r]^{\Xi_{\mathcal{W}}} & \mathcal{W}_G & 
	\mathcal{O}\left(\mathcal{D}_G\right) \ar[r]^{\Xi^*} & \mathcal{O}\left(\mathcal{D}_H\right)
 }
 \]
 
 \end{definition}
 
 \begin{lemma}
 \label{lemma_if_padic_func_exists_it_is_unique}
 If the rigid analytic morphism $\Xi$ exists, then it is unique.
 \end{lemma}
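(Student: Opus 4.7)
The plan is to pull the question back along the finite morphism $\nu_G = (\kappa_G, \psi_G(\phi_G)^{-1}) : \mathcal{D}_G \rightarrow \mathcal{W}_G \times \mathbf{G}_m^{\mathrm{rig}}$ supplied by Theorem \ref{theorem_existence_eigenvariety}, and on each affinoid piece deduce uniqueness of the pullback from the surjectivity of $\psi_G \widehat{\otimes} \nu_G^*$.

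Suppose $\Xi_1, \Xi_2 : \mathcal{D}_H \rightarrow \mathcal{D}_G$ both satisfy the defining diagrams. I first check that $\nu_G \circ \Xi_1 = \nu_G \circ \Xi_2$. The first coordinate is controlled by the weight-space diagram: $\kappa_G \circ \Xi_i = \Xi_{\mathcal{W}} \circ \kappa_H$, independent of $i$. For the second coordinate, note that $\phi_G \in \mathcal{H}^-_G$, so the Hecke diagram gives
\[
\Xi_i^*(\psi_G(\phi_G)) = \psi_H\bigl((\Lambda^* \otimes \xi^*_{\mathrm{ur}})(\phi_G)\bigr),
\]
which is again independent of $i$; inverting, $\Xi_1^*(\psi_G(\phi_G)^{-1}) = \Xi_2^*(\psi_G(\phi_G)^{-1})$.

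Next I choose an admissible cover of $\mathcal{W}_G \times \mathbf{G}_m^{\mathrm{rig}}$ by open affinoids $V$. Since $\nu_G$ is finite, each $W := \nu_G^{-1}(V)$ is an open affinoid and these form an admissible cover of $\mathcal{D}_G$. Because $\nu_G \circ \Xi_1 = \nu_G \circ \Xi_2$, the preimages $\Xi_1^{-1}(W) = \Xi_2^{-1}(W) =: W'$ coincide, and it suffices to prove $\Xi_1^*|_W = \Xi_2^*|_W$ as maps $\mathcal{O}(W) \rightarrow \mathcal{O}(W')$; the gluing is then automatic.

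The key input is the surjection $\psi_G \widehat{\otimes} \nu_G^* : \mathcal{H}^-_G \widehat{\otimes} \mathcal{O}(V) \rightarrow \mathcal{O}(W)$ of Theorem \ref{theorem_existence_eigenvariety}. Composed with $\Xi_i^*$, it sends $h \otimes f$ to
\[
\psi_H\bigl((\Lambda^* \otimes \xi^*_{\mathrm{ur}})(h)\bigr) \cdot (\nu_G \circ \Xi_i)^*(f),
\]
an expression independent of $i$ by the two steps above. Surjectivity then forces $\Xi_1^*|_W = \Xi_2^*|_W$, completing the local step. I do not anticipate a genuine obstacle: the argument is purely formal, relying only on the finiteness of $\nu_G$ and the surjectivity property built into the eigenvariety. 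The only minor subtlety is to remember that $\phi_G$ itself lies in $\mathcal{H}^-_G$, so that the Hecke diagram pins down $\Xi_i^*(\psi_G(\phi_G)^{-1})$, thereby letting both diagrams together cover all generators of $\mathcal{O}(W)$.
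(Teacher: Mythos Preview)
Your argument is correct but proceeds along a genuinely different line from the paper's. The paper observes that $\mathcal{D}_H$ and $\mathcal{D}_G$ are reduced rigid spaces over the algebraically closed field $\qpbar$, so any morphism between them is determined by its effect on $\qpbar$-points; it then notes that the two commutative squares force $(\psi_{G,\Xi(x)},\kappa_G(\Xi(x)))$ to equal $((\Lambda^*\otimes\xi^*_{\mathrm{ur}})^*\psi_{H,x},\,\Xi_{\mathcal{W}}(\kappa_H(x)))$, which pins down $\Xi(x)$ because the evaluation map $\mathcal{D}_G(\qpbar)\hookrightarrow \mathrm{Hom}(\mathcal{H}^-_G,\qpbar)\times\mathcal{W}_G(\qpbar)$ is injective. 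Your route instead exploits the structural input from Theorem~\ref{theorem_existence_eigenvariety} directly: finiteness of $\nu_G$ gives an affinoid cover of $\mathcal{D}_G$, and the surjectivity of $\psi_G\widehat{\otimes}\nu_G^*$ says each affinoid $\mathcal{O}(\nu_G^{-1}(V))$ is generated over $\nu_G^*\mathcal{O}(V)$ by the image of $\mathcal{H}^-_G$, both of which are handled by the two diagrams. The paper's argument is shorter and more conceptual; yours has the advantage of not invoking reducedness at all, so it would go through unchanged for non-reduced eigenvarieties. One small point worth making explicit in your write-up: since $\nu_G$ is finite, $\mathcal{O}(\nu_G^{-1}(V))$ is a finite $\mathcal{O}(V)$-module, so the surjectivity of $\psi_G\widehat{\otimes}\nu_G^*$ actually gives algebraic generation by $\psi_G(\mathcal{H}^-_G)$ over $\nu_G^*\mathcal{O}(V)$, and no continuity or density argument is needed to pass from elementary tensors to all of $\mathcal{O}(\nu_G^{-1}(V))$.
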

 \begin{proof}
 Since the eigenvarieties $\mathcal{D}_H$ and $\mathcal{D}_G$ are reduced, the morphism $\Xi$ is completely determined
 by the induced map on the $\qpbar$-valued points.  The morphisms $\Xi_{\mathcal{W}}$ and ${\Lambda^* \otimes \xi^*_{\mathrm{ur}}}$ induce a map
 \[
 	\mathrm{Hom}(\mathcal{H}^-_H, \qpbar) \times \mathcal{W}_H(\qpbar) \rightarrow 
	\mathrm{Hom}(\mathcal{H}^-_G, \qpbar) \times \mathcal{W}_G(\qpbar)
 \]
 which fixes the 
  map of the eigenvarieties on the $\qpbar$-valued points.  The result follows.
 \end{proof}
 \begin{remark}
 Let us emphasise the main difference between classical Langlands functoriality and $p$-adic Langlands functoriality.
 In the $p$-adic setting, one must make a non-canonical choice for the map of refinements $\mathcal{R}_\nu$ for $\nu \in S_p$, and different choices of maps give rise to different notions of functoriality.
 \end{remark}
 
 Concerning compatibility with classical Langlands functoriality, we have the following result.
  \begin{lemma}
  Assume that $\Xi : \mathcal{D}_H \rightarrow \mathcal{D}_G$, the $p$-adic Langlands functoriality morphism for $\xi$, exists.
  Let $\pi_H$  (resp. $\pi_G$) be an automorphic representations of $H$  (resp. $G$)  
 such that $e_H(\pi_{H,f}) \neq 0$ (resp.  $e_G(\pi_{G,f}) \neq 0$)
 equipped with an accessible
 refinement $\chi_H$ (resp. $\chi_G$).  Let $x_H \in D_H(\qpbar)$ (resp. $x_G \in D_G(\qpbar)$) be
 the $\qpbar$-point on the eigenvariety corresponding to
 the pair 
 $(\nu_H(\pi_H, \chi_H) \otimes \psi_{H, \mathrm{ur}}(\pi_H), \kappa_H(\pi_{H,\infty}) )$
(resp. 
 $(\nu_G(\pi_G, \chi_G) \otimes \psi_{G, \mathrm{ur}}(\pi_G), \kappa_G(\pi_{G,\infty}) )$).
 Then the following statements hold.
 \begin{itemize}
 
 \item    If $\Xi(x_H) = x_G$, then
 $\pi_G$ corresponds to $\pi_H$ via  Langlands functoriality for $\xi$ at both the archimedean places and the  non-archimedean places $\nu \not\in S \cup S_p$.
  
  \item
  Conversely, if
 \begin{itemize}
 	\item  $\pi_G$ corresponds to $\pi_H$ via  Langlands functoriality for $\xi$ at the non-archimedean places $\nu \not\in S \cup S_p$;
	\item $\Xi_{\mathcal{W}}(\kappa(\pi_{H,\infty})) = \kappa(\pi_{G,\infty}) \in \mathcal{W}_{G}(\qpbar)$; and
	\item for  all $\nu \in S_p$,
  	$\chi_{H,\nu}  \circ \mathcal{R}_{\nu} = \chi_{G,\nu}: T^{\overline{\omega}}_{G,\nu} \rightarrow \qpbar^\times$,	\\

 \end{itemize}
 then 
  $\Xi(x_H) = x_G$.
   \end{itemize}
 \end{lemma}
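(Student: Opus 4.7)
The plan is to exploit the bijection from Theorem \ref{theorem_existence_eigenvariety} identifying $\qpbar$-points of each eigenvariety with pairs consisting of a system of Hecke eigenvalues and a weight. Thus $\Xi(x_H)=x_G$ holds if and only if $\kappa_G(\Xi(x_H))=\kappa_G(x_G)$ and $\psi_{\Xi(x_H)}=\psi_{x_G}$ as $\qpbar$-algebra homomorphisms on $\mathcal{H}^-_G$. Both directions of the lemma are then obtained by translating these two conditions through the commutative diagrams defining $\Xi$.

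For the forward direction, the first diagram gives $\kappa_G(\pi_{G,\infty})=\Xi_{\mathcal{W}}(\kappa_H(\pi_{H,\infty}))=\kappa_H(\pi_{H,\infty})\circ\Xi^*_{\mathcal{W}}$. By the first bullet of Hypothesis \ref{hypothesis_1}, the Langlands functorial transfer of $\pi_{H,\omega}$ has highest weight $\kappa(\pi_{H,\omega})\circ\Xi^*_{\mathcal{W},\nu}\circ\iota_\sigma$ for some $\sigma\in\mathfrak{S}_m$. Both this weight and $\kappa(\pi_{G,\omega})$ are dominant, so the $\iota_\sigma$-ambiguity is killed by the fact that dominant weights in the same Weyl orbit coincide. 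Since irreducible representations of the compact group $U_{\vec{n}}(F_\omega)$ are determined by their highest weight, this forces $\pi_G$ and $\pi_H$ to correspond at each $\omega\mid\infty$. Restricting $\psi_{\Xi(x_H)}=\psi_{x_G}$ to the spherical Hecke algebra, the second diagram yields $\psi_{G,\mathrm{ur}}(\pi_G)=\psi_{H,\mathrm{ur}}(\pi_H)\circ\xi^*_{\mathrm{ur}}$, which is precisely the Satake-parameter characterization of Langlands transfer at unramified non-archimedean places.

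For the converse direction, weight agreement $\kappa_G(\Xi(x_H))=\kappa_G(x_G)$ is immediate from the first diagram and the assumption $\Xi_{\mathcal{W}}(\kappa(\pi_{H,\infty}))=\kappa(\pi_{G,\infty})$. Hecke agreement reduces, via the second diagram and the tensor decomposition $\mathcal{H}^-_G=\mathfrak{U}^-_G\otimes\mathcal{H}_{G,\mathrm{ur}}$, to two checks. The unramified identity $\psi_{H,\mathrm{ur}}(\pi_H)\circ\xi^*_{\mathrm{ur}}=\psi_{G,\mathrm{ur}}(\pi_G)$ is simply the unramified Langlands transfer assumption. The Atkin--Lehner identity $\nu_H(\pi_H,\chi_H)\circ\Lambda^*=\nu_G(\pi_G,\chi_G)$ is verified place by place by applying the second bullet of Hypothesis \ref{hypothesis_1} with $\chi=\chi_{H,\nu}\cdot\delta^{-1/2}_{B_{H,\nu}}$ and $\delta=\kappa(\pi_{H,\omega})$, and then invoking the normalization lemma to rewrite $(\chi_{H,\nu}\cdot\delta^{-1/2}_{B_{H,\nu}})\circ\mathcal{R}'_\nu$ as $(\chi_{H,\nu}\circ\mathcal{R}_\nu)\cdot\delta^{-1/2}_{B_{G,\nu}}$, and finally using the refinement compatibility $\chi_{H,\nu}\circ\mathcal{R}_\nu=\chi_{G,\nu}$ together with the weight compatibility $\kappa(\pi_{H,\omega})\circ\Xi^*_{\mathcal{W},\nu}=\kappa(\pi_{G,\omega})$ extracted place-wise from the global weight assumption.

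The main obstacle is precisely the book-keeping in the Atkin--Lehner calculation: one must track the several factors of $\delta_B^{\pm 1/2}$ introduced by the normalizations of $\nu_H$, $\nu_G$, and $\mathcal{R}'_\nu$ versus $\mathcal{R}_\nu$, and ensure that the permutation ambiguity in $\mathcal{R}_\nu$ is consistent with the $\iota_\sigma$ appearing in Hypothesis \ref{hypothesis_1}. Once the definitions are carefully unwound, the identity follows from a direct application of the second bullet of Hypothesis \ref{hypothesis_1}.
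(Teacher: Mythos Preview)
Your proof is correct and follows essentially the same approach as the paper: both arguments reduce the equality $\Xi(x_H)=x_G$ to matching the weight and Hecke components via the two commuting squares, deducing the archimedean compatibility from the defining property of $\Xi_{\mathcal{W}}$ (Hypothesis~\ref{hypothesis_1}) and the unramified compatibility from that of $\xi^*_{\mathrm{ur}}$. The paper's own proof is extremely terse (``follows from the definitions in a similar fashion''), whereas you actually carry out the bookkeeping, in particular the Atkin--Lehner verification and the Weyl-orbit argument disposing of the $\iota_\sigma$-ambiguity.
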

 \begin{proof}
 We shall prove the first statement since the proof of the second statement  follows  from the definitions in a similar fashion.  The conditions imposed upon $\pi_H$ and $\pi_G$ ensure that the points
 $x_H$ and $x_G$ exist.   
  The representation $\pi_G$ is seen to be the  transfer of $\pi_H$ via Langlands functoriality at both the
  archimedean places and the non-archimedean places $\nu\not\in S \cup S_p$.  The former follows from our conditions
  imposed upon the weight space morphism $\Xi_{\mathcal{W}}$ whilst the later follows from the properties of the 
  spherical Hecke algebra morphism $\xi^*_{\mathrm{ur}} : \mathcal{H}_{G, \mathrm{ur}} \rightarrow \mathcal{H}_{H, \mathrm{ur}}$.

 \end{proof}
 
 \section{$p$-adic Langlands functoriality: Construction of the morphism}
 \label{p_adic_functoriality_examples}
 We shall construct the $p$-adic Langlands functoriality morphism when the $L$-homomorphism $\xi$ is the Langlands direct sum. 
 
 \subsection{The Langlands direct sum for the unitary group}
 We shall define the Langlands direct sum $L$-homomorphism and make explicit the induced transfer of representations.
 Let $E/F$ be a totally imaginary quadratic extension of a totally real field.
Let $\eta: F^\times\backslash\mathbf{A}^\times_F \rightarrow \mathbf{C}^\times$
be the character 
associated to the  field extension $E/F$ via class field theory.
Fix a  unitary character
\[
\mu : E^\times\backslash \mathbf{A}_E^\times \rightarrow \mathbf{C}^\times
\]
 such that
\begin{itemize}
\item	$\mu$ extends $\eta$;
\item if $\nu \in S_p$, then $\mu$ is unramified at $\nu$; and
\item if $\nu$ is non-archimedean and inert in $E$, then $\mu$ is unramified at $\nu$.
\end{itemize}
 The character $\mu$ can be seen via class field theory as a character of the Weil group
 $W_{E}$.  
 At archimedean places $\omega$ of $E$, the Hecke character 
is of the  form (cf. \cite[\S6.9.2]{bellaiche_chenevier_book})
		\begin{align*}
			\mu_{\omega} : \mathbf{C}^\times &\rightarrow \mathbf{C}^\times \\
			z &\mapsto \left(z/\overline{z}\right)^{\alpha_{\omega}}
	\end{align*}
for some half-integer $\alpha_\omega$. 

 For all integers $i \in \mathbf{Z}$,  we shall  define  the Hecke character
$\mu_i : E^\times\backslash \mathbf{A}_E^\times \rightarrow \mathbf{C}^\times$
\[
	\mu_{i} = \begin{cases}
		\mu &: \text{if $i$ is odd}	\\
		\mathbf{1} &: \text{if $i$ is even}
	\end{cases}
\]
and correspondingly for all archimedean places $\omega$ of $E$, we define
\[
	\alpha_{i, \omega} = \begin{cases}
		\alpha_\omega	&: \text{if $i$ is odd}	\\
		0	&: \text{if $i$ is even}
	\end{cases}
\]
In what follows, we shall often abuse notation and view an archimedean place $\omega$ of $F$ as the  
corresponding
archimedean place of $E$ lying above $\omega$ and vice versa.  We shall also often view the characters
$\mu_i$ as having values in $\qpbar$ via the isomorphism $\iota_p$.

We can now introduce the following $L$-homomorphism, which we shall refer to as the \emph{Langlands direct sum}
$L$-homomorphism.
\begin{eqnarray*}
	\xi = \xi_{\vec{n}} : {}^L (U_{n_1} \times \cdots \times  U_{n_r}) &\rightarrow& {}^L U_n		\\
		g_1 \times \cdots \times g_r \times 1 &\mapsto& \mathrm{diag}\left(g_1,\ldots, g_r\right) \times 1	\\
		I_{n_1} \times \cdots I_{n_r} \times w &\mapsto& \mathrm{diag}\left(\mu_{n-n_1}\left(w\right) I_{n_1}, \ldots, \mu_{n-n_r}\left(w\right)  I_{n_r} 	\right) \times w \ \  \forall w \in W_E	\\
		I_{n_1} \times \cdots \times I_{n_r} \times w_c &\mapsto& \mathrm{diag}\left(\Phi_{n_1}, \ldots, \Phi_{n_r}\right) \Phi_n^{-1} \times w_c
\end{eqnarray*}
where $w_c$ denotes a chosen lift of $c \in \mathrm{Gal}\left(E/F\right)$
and $I_{n_i}$ denotes the identity matrix of $GL_{n_i}(\mathbf{C})$.
\begin{remark}
If $\vec{n} = \left(a,b\right)$, then $\xi_{\vec{n}}$ is the endoscopic $L$-homomorphism studied in \cite[\S4]{rogawski_book}.
\end{remark}
 We shall now make the corresponding Langlands functorial transfer of representations
 explicit in some cases (cf. \cite[\S4.2]{white_endoscopy_unitary_group}).
 \begin{itemize}
 	\item Assume that $\omega|\infty$.
	Let $\pi_\omega$ be an irreducible admissible representation of 
	$U_{\vec{n}}\left(F_\omega\right)$.
Let $\{ k_{i,j} : 1 \leq i \leq r,\ 1 \leq j \leq n_i\}$ be the ordered set of highest weights of $\pi_\omega$.	
	The  Langlands functorial transfer of $\pi_\omega$ is defined if the numbers
	\[
		k_{i,j} + \frac{n_i +1}{2} -j + \alpha_{n-n_i, \omega},  \ i=1,\ldots, r \ \ j=1,\ldots, n_i
	\]
	 are distinct (otherwise the corresponding $L$-parameter of $U_n(F_\omega)$ will not be relevant).
	 Let 
	 \[
	 	k'_i,  \ i=1,\ldots, n
	 \]
	 denote the reordering of the 
	 $k_{i,j} + \frac{n_i +1}{2} -j + \alpha_{n-n_i, \omega}$
	  such that $k'_s > k'_{t}$ for all $s > t$, and
	  let $k_i = k'_i - \frac{n+1}{2} + i$ for all $i=1,\ldots, n$.
The Langlands functorial transfer of $\pi_\omega$, denoted $\xi_{\vec{n}}\left(\pi_\omega\right)$, is the irreducible
	admissible representation of $U_{n}\left(F_\omega\right)$ whose highest weights  are equal to 
	\[
		\{k_i : i=1,\ldots, n\}
	\]
 We remind the reader that the terms $\frac{n_i + 1}{2} - j$ and $-\frac{n+1}{2} + i$ appear due to the difference between the Langlands parameterisation and the highest weight parameterisation of an irreducible admissible representation
 of $U(F_\omega)$ (cf. \cite[\S6.7]{bellaiche_chenevier_book}).
	
	\item Assume that $\nu$ is a finite place of $F$ that splits in $E$. If $\pi_\nu$ is an irreducible admissible unitary representation of
	$U_{\vec{n}}\left(F_\nu\right) = GL_{n_1} \times \cdots GL_{n_r}(F_\nu)$, then
	\[
		\mathrm{Ind}_{P_{\vec{n}}}^{U_n} \pi_{1,\nu} \cdot \mu_{n-n_1,\nu} \times \cdots
			\times \pi_{r,\nu} \cdot \mu_{n-n_r, \nu}
	\]
	where $P_{\vec{n}}$ denotes the standard Parabolic subgroup with Levi-component $U_{\vec{n}}$,
	is an irreducible admissible unitary representation of $U_n(F_\nu)$ (cf. \cite{bernstein_irreducble_induction}), and it is the Langlands
	functorial transfer of $\pi_\nu$.
	
	\item Assume that $\nu$ is a finite place of $F$ that remains inert in $E$.  If $\pi_\nu$ is unramified, then
	the correspondence can be explicitly described in terms of Satake parameters
	(cf. \cite[\S4]{minguez}).
 \end{itemize}

Let $\nu \in S_p$.
We shall  study the map on refinements induced by $\mathcal{R}_\nu$.
Explicitly, the morphism can be seen to be equal to
 $\mathcal{R}_\nu =  \mathcal{R}_{0,\nu} \circ \iota_{\sigma_\nu}$ for a choice of $\sigma_\nu \in \mathfrak{S}_n$ where 
 \begin{eqnarray*}
\mathcal{R}_{0,\nu}: \qpbar[T_{U_{{n}}, \nu}^{\overline{\omega}}]
	&\rightarrow& \qpbar[T_{U_{\vec{n}}, \nu}^{\overline{\omega}}]	\\
\diag(x_{1,1}, \ldots, x_{1,n_1}, x_{2,1}, \ldots, x_{r, n_r}) 
	&\mapsto& \mu_{n-n_1,\nu}(x_{1,1}  \cdots x_{1,n_1})\cdot \diag(x_{1,1}, \ldots, x_{1,n_1}) \times \\
	&& \ \ \cdots \times
	\mu_{n-n_r,\nu}(x_{r,1}  \cdots x_{r,n_r}) \cdot
			\diag(x_{r,1}, \ldots, x_{r,n_r}).
\end{eqnarray*}
The $\qpbar$-algebra morphism $\mathcal{R}_{0,\nu}$ induces a map on refinements, which can be
explicitly described as follows.
Let 
\[
\chi_{U_{\vec{n}, \nu}}
= \chi_{U_{\vec{n}},1, \nu} \times \cdots \times \chi_{U_{\vec{n}},r, \nu}: T^{\overline{\omega}}_{U_{\vec{n}},\nu} \simeq T_{U_{\vec{n}},\nu}/T^0_{U_{\vec{n}},\nu} \rightarrow \qpbar^\times
\]
be an unramified character.
Then
\begin{eqnarray*}
 \chi_{U_{\vec{n}, \nu}} \circ \mathcal{R}_{0,  \nu}  : T_{U_{{n}},\nu}/T^0_{U_{{n}},\nu} \rightarrow \qpbar^\times	\\
\diag(x_{1,1}, \ldots, x_{1,n_1}, x_{2,1}, \ldots, x_{r, n_r}) 
	&\mapsto&
		\prod_{i=1}^r 
	 \mu_{n-n_i,\nu}(x_{i,1}  \cdots x_{i,n_i})\cdot \chi_{U_{\vec{n}}, i, \nu}(\diag(x_{i,1}, \ldots, x_{i,n_i})).  \\
\end{eqnarray*}
Under certain conditions, one can ensure that accessible refinements are mapped to accessible refinements, which will
not be the case in general.
\begin{lemma}
\label{lemma_accessible_refinements_mapped_accessible_refinements}
Let $\nu \in S_p$.
  Let $\pi_{U_{\vec{n}},\nu} = \times_{i=1}^r \pi_{U_{\vec{n}}, i, \nu}$ be an irreducible admissible tempered representation
of $U_{\vec{n}}(F_\nu)$ such that $\pi_{U_{\vec{n}},\nu}^{I_{{U_{\vec{n}},\nu}}} \neq 0$.
  Let 
$\chi_{U_{\vec{n}}, \nu}$ be an accessible refinement of $\pi_{U_{\vec{n}, \nu}}$.
Let $\pi_{U_{n}, \nu}$ be the 
$\xi_{\vec{n}}$-Langlands functorial transfer of $\pi_{U_{\vec{n}},\nu}$ to $U_{n}(F_\nu)$.  
Let $\sigma_\nu \in \mathfrak{S}_n$ such that 
\[
\sigma_\nu(i) < \sigma_\nu(j) \textrm{ for all } n_0 + \cdots + n_{t} \leq i < j \leq n_{t+1} \textrm{ for all } 0 \leq t \leq r-1
\]
where $n_0 = 0$.
Then $\pi_{U_{{n}},\nu}^{I_{U_{{n}},\nu}} \neq 0$
and $\chi_{U_{\vec{n}}, \nu} \circ  \mathcal{R}_{0, \nu} \circ \iota_{\sigma_\nu}$ is an accessible refinement
of $\pi_{U_n, \nu}$.
\end{lemma}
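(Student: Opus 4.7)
The plan is to establish the two claims separately: non-zero Iwahori invariants for $\pi_{U_n,\nu}$, and accessibility of the displayed character as a refinement. Throughout, I work in the $p$-adic split setting, where $U_n(F_\nu) = GL_n(F_\nu)$, $U_{\vec{n}}(F_\nu) = GL_{n_1}(F_\nu) \times \cdots \times GL_{n_r}(F_\nu)$, and the Langlands transfer is realised as the parabolic induction
\[
\pi_{U_n,\nu} = \mathrm{Ind}_{P_{\vec{n}}}^{GL_n}\bigl(\pi_{U_{\vec{n}},1,\nu} \cdot \mu_{n-n_1,\nu} \times \cdots \times \pi_{U_{\vec{n}},r,\nu} \cdot \mu_{n-n_r,\nu}\bigr).
\]

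For the first assertion, since each $\mu_{n-n_i,\nu}$ is an unramified unitary character, each twisted factor $\pi_{U_{\vec{n}},i,\nu} \cdot \mu_{n-n_i,\nu}$ remains tempered and Iwahori-spherical, so the Bernstein irreducibility theorem for parabolic induction of tempered representations of $GL$ ensures that $\pi_{U_n,\nu}$ is irreducible. Applying the second clause of Lemma \ref{lemma_bernstein_borel_casselman_matsumoto} to each factor gives an embedding of $\pi_{U_{\vec{n}},i,\nu}$ as a subrepresentation of a principal series $\mathrm{Ind}_{B_{GL_{n_i}}}^{GL_{n_i}} \iota_p^{-1} \chi_{U_{\vec{n}},i,\nu}$; twisting by $\mu_{n-n_i,\nu}$ and applying normalised induction in stages through $P_{\vec{n}}$ realises $\pi_{U_n,\nu}$ as a subrepresentation of a full unramified principal series of $GL_n$. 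Lemma \ref{lemma_bernstein_borel_casselman_matsumoto} then forces $\pi_{U_n,\nu}^{I_{U_n,\nu}} \neq 0$.

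The case $\sigma_\nu = \mathrm{id}$ of the second assertion is immediate from the preceding step: by direct comparison with the explicit formula for $\mathcal{R}_{0,\nu}$, the character of the principal series into which $\pi_{U_n,\nu}$ has been embedded is exactly $\chi_{U_{\vec{n}},\nu} \circ \mathcal{R}_{0,\nu}$. To handle a general block-order-preserving $\sigma_\nu$, I would compute the Jacquet module of $\pi_{U_n,\nu}$ along the unipotent radical of the Borel using the Bernstein--Zelevinsky geometric lemma. Its graded pieces are indexed by the minimal-length coset representatives of $W_L \backslash W$, where $W_L = \mathfrak{S}_{n_1} \times \cdots \times \mathfrak{S}_{n_r}$ sits inside $W = \mathfrak{S}_n$; these representatives are precisely the $(n_1, \ldots, n_r)$-shuffles, which coincide with the combinatorial condition imposed on $\sigma_\nu$. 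For each such shuffle, the corresponding graded piece contains as a quotient the shuffled concatenation character matching $\chi_{U_{\vec{n}},\nu} \circ \mathcal{R}_{0,\nu} \circ \iota_{\sigma_\nu}$, since the accessibility of $\chi_{U_{\vec{n}},\nu}$ guarantees via Frobenius reciprocity that each $\chi_{U_{\vec{n}},i,\nu}$ appears as a quotient of the Jacquet module of $\pi_{U_{\vec{n}},i,\nu}$. Frobenius reciprocity then produces the desired embedding of $\pi_{U_n,\nu}$ into the shuffled principal series, establishing accessibility.

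The main obstacle will be upgrading the ``shuffled character appears as a quotient of a graded piece'' conclusion of the geometric lemma to an actual embedding of $\pi_{U_n,\nu}$ into the shuffled principal series --- equivalently, showing that the shuffled character is a quotient of the full Jacquet module, not merely of one filtration step. Here one exploits the temperedness hypothesis, which controls the behaviour of the filtration via Casselman's criterion, together with a combinatorial verification matching each $\sigma_\nu$ to the Weyl element indexing its coset. An alternative dimension-counting argument yields the same conclusion: the total number of candidate characters $\chi_{U_{\vec{n}},\nu} \circ \mathcal{R}_{0,\nu} \circ \iota_{\sigma_\nu}$ produced by the construction equals $|W/W_L| \cdot \prod_i \dim \pi_{U_{\vec{n}},i,\nu}^{I_{GL_{n_i}}}$, which by the Iwahori decomposition coincides with $\dim \pi_{U_n,\nu}^{I_{U_n,\nu}}$ and hence, by Lemma \ref{lemma_bernstein_borel_casselman_matsumoto}, with the number of accessible refinements of $\pi_{U_n,\nu}$.
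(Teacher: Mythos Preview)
Your approach is sound, and it differs from the paper's. The paper does not invoke the geometric lemma at all: instead it uses temperedness to write each factor $\pi_{U_{\vec{n}},i,\nu}$ explicitly as a full induction from twisted Steinbergs, rewrites $\pi_{U_n,\nu}$ in the same form, and then appeals once to Zelevinsky's Theorem~6.1 (on submodules of products of segments) for the accessibility statement. Your route through the Bernstein--Zelevinsky filtration of $r_B(\pi_{U_n,\nu})$ is more hands-on and arguably more transparent about why exactly the block-order-preserving condition on $\sigma_\nu$ is the right one; the paper's route is terser but outsources the combinatorics to a black-box citation.

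That said, the paragraph you label ``the main obstacle'' is addressing a non-problem, and your two proposed resolutions are both more intricate than needed. Look again at the third clause of Lemma~\ref{lemma_bernstein_borel_casselman_matsumoto}: it says that the \emph{semisimplification} of $\pi^{I}$ is the direct sum over accessible refinements. In other words, for an irreducible $\pi$, every Jordan--H\"older constituent of $\pi^{I}$ (equivalently of $r_B(\pi)$) already \emph{is} an accessible refinement --- not just every quotient. So once the geometric lemma exhibits your shuffled character as a subquotient of some graded piece, it is automatically a constituent of $r_B(\pi_{U_n,\nu})$ (Jordan--H\"older), hence an accessible refinement by Lemma~\ref{lemma_bernstein_borel_casselman_matsumoto} applied to the irreducible $\pi_{U_n,\nu}$. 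Temperedness enters only to guarantee that irreducibility. No Casselman-criterion argument is needed, and the dimension count --- which as you wrote it only gives equality of cardinalities, not of sets --- becomes superfluous.
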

\begin{proof}
Since the representation $\pi_{U_{\vec{n}}, \nu}$ is tempered with an Iwahori-invariant vector, it is 
isomorphic to
a representation of the form
\[
	\pi_{U_{\vec{n}}, \nu} \simeq 
		\times_{i=1}^r \text{Ind}_{P_i}^{GL_{n_i}} \gamma_{i,1} \cdot \text{St}(d_{i,1}) \times \cdots \times \gamma_{i,m_i} \cdot \text{St}(d_{i,m_i})
\]
where for all $i=1,\ldots,r$
\begin{itemize}
\item $\sum_{j=1}^{m_i} d_{i,j} =n_i$;
\item $M_i$ is the standard Levi subgroup with blocks of size $d_{i,j}$ and
 $P_i$ is the corresponding standard Parabolic; and
\item for all $j=1,\ldots, m_i$, 
	\begin{itemize}
	\item if $d_{i,j} \neq 1$, then
 $\gamma_{i,j}$ is an unramified unitary character
  and $\text{St}(d_{i,j})$ denotes the standard Steinberg representation of $GL_{d_{i,j}}(F_\nu)$ and
  \item
  if $d_{i,j} = 1$, then $\gamma_{i,j} \neq \mathbf{1}$ 
  is a non-trivial unramified unitary character
  and  $\text{St}(1) = \mathbf{1}$ denotes  the trivial character.
  \end{itemize}
\end{itemize}
It follows that
\[
	\pi_{U_n, \nu} \simeq
			 \text{Ind}_{P}^{GL_{n}}
			 \times_{i=1}^r \times_{j=1}^{m_i}
			\mu_{n-n_i,\nu} \cdot  \gamma_{i,j} \cdot \text{St}(d_{i,j}) 
		 \] 
where $P$ denotes the corresponding standard Parabolic.
This representation has an Iwahori invariant vector. The fact that
the morphism $\mathcal{R}_\nu$, under the restrictions imposed upon $\sigma_\nu$, sends
 an accessible refinement
of $\pi_{U_{\vec{n}}, \nu}$
to an accessible refinement of $\pi_{U_n, \nu}$  follows from
a general result of Zelevinsky \cite[Theorem 6.1]{zelevisnsky}.
\end{proof}

Let $\nu \in S_p$ and let $\omega = \iota_p^{-1}\circ  \nu \in S_\infty$ denote the corresponding archimedean place. 
Choose a $\sigma_\nu \in \mathfrak{S}_n$ as in Lemma \ref{lemma_accessible_refinements_mapped_accessible_refinements}.
The morphisms $\Lambda^*_\nu$ and $\Xi^*_{\mathcal{W},\nu}$ can be chosen to be
 $\Lambda^*_\nu =   \Lambda^*_{0,\nu} \circ \iota_{\sigma_\nu}$ and
	$\Xi^*_{\mathcal{W},\nu} = \Xi^*_{\mathcal{W},0, \nu} \circ \iota_{\sigma_\nu}$
where
\begin{eqnarray*}
\Xi^*_{\mathcal{W},0, \nu} : \qpbar[T_{U_{{n}}, \nu}] &\rightarrow& \qpbar[T_{U_{\vec{n}}, \nu}]		\\
\diag(x_{1,1}, \ldots, x_{1,n_1}, x_{2,1}, \ldots, x_{r, n_r}) 
	&\mapsto& \prod_{i=1}^r \prod_{j=1}^{n_i} x_{i,j}^{\alpha_{n-n_i, \omega} + \frac{n_i -n}{2}   + {n_1 + \cdots + n_{i-1}}} \diag(x_{i,1}, \ldots, x_{i,n_i}) \\
   	\Lambda^*_{0, \nu}  : \qpbar[T_{U_{n},\nu}^{\overline{\omega}, -}] &\rightarrow& \qpbar[T_{U_{\vec{n}},\nu}^{\overline{\omega}, -}] 	\\
	t & \mapsto&  \Lambda^{\dagger}_{0, \nu} (t) \cdot \delta^{-1/2}_{B_{U_n,\nu}}(t) \cdot 
			(\delta^{1/2}_{B_{U_{\vec{n}},\nu}} \circ \Lambda^{\dagger}_{0, \nu} )(t) \\
\Lambda^{\dagger}_{0, \nu} : \qpbar[T^{\overline{\omega}, -}_{U_{{n}}, \nu}] &\rightarrow& \qpbar[T^{\overline{\omega}, -}_{U_{\vec{n}}, \nu}]		\\
\diag(x_{1,1}, \ldots, x_{1,n_1}, x_{2,1}, \ldots, x_{r, n_r}) 
	&\mapsto& \prod_{i=1}^r \prod_{j=1}^{n_i}
	\mu_{n-n_i,\nu}(x_{i,1}  \cdots x_{i,n_i}) \\
&& \ \ \ \ \ \ \ \	 x_{i,j}^{\alpha_{n-n_i, \omega} + \frac{n_i -n}{2}   + {n_1 + \cdots + n_{i-1}}} \diag(x_{i,1}, \ldots, x_{i,n_i}). 
\end{eqnarray*}

\begin{lemma}
\label{lemma_weight_space_morphism_isomorphism}
The induced morphism 
\[
	\Xi_{\mathcal{W}}: \mathcal{W}_{U_{\vec{n}}} \rightarrow \mathcal{W}_{U_n}
\]
is an isomorphism of weight spaces.
\end{lemma}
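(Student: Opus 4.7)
The plan is to reduce to the statement at each place, since by construction $\Xi_{\mathcal{W}} = \prod_{\nu \in S_p} \Xi_{\mathcal{W},\nu}$, so it is enough to show each factor $\Xi_{\mathcal{W},\nu}: \mathcal{W}_{U_{\vec{n}},\nu} \to \mathcal{W}_{U_n,\nu}$ is an isomorphism. Recalling that $\mathcal{W}_{U_n,\nu}$ (resp.\ $\mathcal{W}_{U_{\vec{n}},\nu}$) is the rigid analytic space associated via de Jong's functor to the Iwasawa algebra $\mathbf{Z}_p[[T^0_{U_n,\nu}]]$ (resp.\ $\mathbf{Z}_p[[T^0_{U_{\vec{n}},\nu}]]$), and that $\Xi_{\mathcal{W},\nu}$ is induced by $\Xi^*_{\mathcal{W},\nu}$, I am reduced to showing that $\Xi^*_{\mathcal{W},\nu}$ induces an isomorphism of these Iwasawa algebras.

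Since $\Xi^*_{\mathcal{W},\nu} = \Xi^*_{\mathcal{W},0,\nu} \circ \iota_{\sigma_\nu}$ and $\iota_{\sigma_\nu}$ is visibly a ring isomorphism (it is the coordinate-permutation automorphism of $T_{U_n,\nu}$ extended to group algebras), the task reduces to analyzing $\Xi^*_{\mathcal{W},0,\nu}$. By inspection of the defining formula, $\Xi^*_{\mathcal{W},0,\nu}$ sends the basis element $[t] \in \qpbar[T_{U_n,\nu}]$ attached to $t = \diag(x_{1,1}, \ldots, x_{r,n_r})$ to $\chi(t) \cdot [\phi(t)]$, where
\[
\phi : T_{U_n,\nu} \xrightarrow{\sim} T_{U_{\vec{n}},\nu}, \quad \diag(x_{1,1}, \ldots, x_{r,n_r}) \mapsto \prod_{i=1}^r \diag(x_{i,1}, \ldots, x_{i,n_i})
\]
is the obvious block-rearrangement isomorphism and $\chi$ is the algebraic character $\chi(t) = \prod_{i,j} x_{i,j}^{c_{i,j}}$ with $c_{i,j} = \alpha_{n-n_i, \omega} + (n_i - n)/2 + n_1 + \cdots + n_{i-1}$. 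Both the group-algebra isomorphism induced by $\phi$ and the multiplicative twist by the unit $\chi$ are evidently ring isomorphisms, so their composition is an isomorphism of $\qpbar$-algebras with explicit inverse $[s] \mapsto \chi^{-1}(\phi^{-1}(s)) \cdot [\phi^{-1}(s)]$.

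The last point to verify is that this ring isomorphism restricts to a $\mathbf{Z}_p$-algebra isomorphism on the compact pieces $\mathbf{Z}_p[T^0_{U_n,\nu}] \to \mathbf{Z}_p[T^0_{U_{\vec{n}},\nu}]$, which is the only real content beyond formal verifications. The block map $\phi$ clearly sends $T^0_{U_n,\nu}$ isomorphically onto $T^0_{U_{\vec{n}},\nu}$, so the issue is purely the integrality of the character $\chi$ on $T^0_{U_n,\nu}$. Since $p$ splits completely in $E$ we have $F_\nu = \qp$ and $T^0_{U_n,\nu} = (\mathbf{Z}_p^\times)^n$, so the twist lands in $\mathbf{Z}_p^\times$ as soon as each exponent $c_{i,j}$ lies in $\mathbf{Z}$; this is a parity check, handled by splitting into the cases $n - n_i$ even (where $\alpha_{n-n_i,\omega} = 0$ and $(n_i-n)/2 \in \mathbf{Z}$) and $n - n_i$ odd (where $\alpha_{n-n_i,\omega} = \alpha_\omega \in \tfrac{1}{2} + \mathbf{Z}$ and $(n_i-n)/2 \in \tfrac{1}{2} + \mathbf{Z}$ combine to an integer), the half-integrality of $\alpha_\omega$ with the correct parity being precisely the condition that $\mu$ extends $\eta$. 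This is the content of the relevant half of Hypothesis \ref{hypothesis_1} applied to the present construction, and completing gives the isomorphism $\mathbf{Z}_p[[T^0_{U_n,\nu}]] \xrightarrow{\sim} \mathbf{Z}_p[[T^0_{U_{\vec{n}},\nu}]]$, which via de Jong's functor yields the desired isomorphism of rigid analytic weight spaces.
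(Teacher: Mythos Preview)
Your proof is correct and follows the same approach as the paper: reduce to each $\nu\in S_p$, then show that the induced map of Iwasawa algebras $\mathbf{Z}_p[[T^0_{U_n,\nu}]]\to\mathbf{Z}_p[[T^0_{U_{\vec n},\nu}]]$ is an isomorphism, and invoke de Jong's functor. The paper states this last fact without further comment, whereas you unpack it by writing $\Xi^*_{\mathcal{W},0,\nu}$ as a block-rearrangement isomorphism twisted by an algebraic character and then checking integrality of the exponents via the parity argument on $\alpha_\omega$; this is a welcome elaboration but not a different route.
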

\begin{proof}
This follows directly from the fact that the induced $\mathbf{Z}_p$-algebra morphisms
\[
	\Xi^*_{\mathcal{W}, \nu}: \mathbf{Z}_p[[T^0_{U_n, \nu}]] \rightarrow \mathbf{Z}_p[[T^0_{U_{\vec{n}}}]]
\]
are isomorphisms for all $\nu \in S_p$.
\end{proof}

\begin{lemma}
\label{lemma_langlands_direct_sum_after_fix_sigma_morphisms_weight_unique}
Let $\nu \in S_p$ and
let $\sigma_\nu \in \mathfrak{S}_n$ be as in Lemma \ref{lemma_accessible_refinements_mapped_accessible_refinements}
(this fixes 
 the morphism $\mathcal{R}_\nu =  \mathcal{R}_{0,\nu} \circ \iota_{\sigma_\nu}$).
The morphisms 
$\Lambda^*_\nu$ and $\Xi^*_{\mathcal{W},\nu}$ 
are
the unique morphisms that
satisfy Hypothesis \ref{hypothesis_1} for this choice of $\mathcal{R}_{\nu}$.
\end{lemma}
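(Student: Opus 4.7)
The plan is to extract $\Lambda^*_\nu$, then the restriction of $\Xi^*_{\mathcal{W},\nu}$ to $\qpbar[T^{\overline{\omega}}_{G,\nu}]$, and finally its restriction to $\qpbar[T^0_{G,\nu}]$, using the conditions of Hypothesis \ref{hypothesis_1} together with the fact that $\mathcal{R}_\nu$ (hence $\mathcal{R}'_\nu$) is fixed.

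First I would exploit the compatibility condition of Hypothesis \ref{hypothesis_1}. Taking $\chi$ to be the trivial character and letting $\delta$ range over all continuous characters $T^{\overline{\omega},-}_{H,\nu} \to \qpbar^\times$ yields $\delta \circ \Lambda^*_\nu = \delta \circ \Xi^*_{\mathcal{W},\nu}$ as characters of $T^{\overline{\omega},-}_{G,\nu}$. Since distinct characters of the free abelian group $T^{\overline{\omega}}_{H,\nu} \simeq \mathbf{Z}^n$ are linearly independent over $\qpbar$ (Artin), these $\delta$ separate elements of the group algebra $\qpbar[T^{\overline{\omega}}_{H,\nu}]$, forcing $\Lambda^*_\nu = \Xi^*_{\mathcal{W},\nu}|_{\qpbar[T^{\overline{\omega},-}_{G,\nu}]}$. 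Symmetrically, taking $\delta$ trivial and varying $\chi$ yields $\Lambda^*_\nu = \mathcal{R}'_\nu|_{\qpbar[T^{\overline{\omega},-}_{G,\nu}]}$. Hence both $\Lambda^*_\nu$ and $\Xi^*_{\mathcal{W},\nu}|_{\qpbar[T^{\overline{\omega},-}_{G,\nu}]}$ are uniquely pinned down by the fixed $\mathcal{R}_\nu$. The second bullet of Hypothesis \ref{hypothesis_1} then extends this uniqueness from $\qpbar[T^{\overline{\omega},-}_{G,\nu}]$ to the full $\qpbar[T^{\overline{\omega}}_{G,\nu}]$: $T^{\overline{\omega},-}_{G,\nu}$ generates $T^{\overline{\omega}}_{G,\nu}$ as an abelian group, and $\mathcal{R}'_\nu$ sends its elements to units of $\qpbar[T^{\overline{\omega}}_{H,\nu}]$, so a $\qpbar$-algebra morphism extending $\mathcal{R}'_\nu|_{\qpbar[T^{\overline{\omega},-}_{G,\nu}]}$ is unique.

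Next, the restriction conditions in Hypothesis \ref{hypothesis_1} guarantee that $\Xi^*_{\mathcal{W},\nu}$ respects the canonical decomposition $\qpbar[T_{G,\nu}] \simeq \qpbar[T^0_{G,\nu}] \otimes_\qpbar \qpbar[T^{\overline{\omega}}_{G,\nu}]$, so it remains to pin down $\Xi^*_{\mathcal{W},\nu}|_{\qpbar[T^0_{G,\nu}]}$. For this I would invoke the first bullet of Hypothesis \ref{hypothesis_1}: given an admissible transferring pair $(\pi_{H,\omega}, \pi_{G,\omega})$, some $\sigma \in \mathfrak{S}_n$ satisfies $\kappa(\pi_{G,\omega}) = \kappa(\pi_{H,\omega}) \circ \Xi^*_{\mathcal{W},\nu} \circ \iota_\sigma$ as characters of $T_{G,\nu}$. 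Restricting this identity to $T^{\overline{\omega}}_{G,\nu}$, where $\Xi^*_{\mathcal{W},\nu}$ is already known, and inserting the explicit archimedean transfer recipe (expressing $k'_\ell$ as the decreasing rearrangement of $k_{i,j} + \frac{n_i+1}{2} - j + \alpha_{n-n_i,\omega}$) pins down $\sigma$: the block-preserving constraint on $\sigma_\nu$ from Lemma \ref{lemma_accessible_refinements_mapped_accessible_refinements} kills the residual within-block ambiguity. With $\sigma$ fixed, restricting the character identity to $T^0_{G,\nu}$ gives linear constraints on $\Xi^*_{\mathcal{W},\nu}|_{\qpbar[T^0_{G,\nu}]}$; as $\pi_{H,\omega}$ varies over a Zariski-dense set of regular dominant highest weights, the algebraic characters $\kappa(\pi_H)|_{T^0_{H,\nu}}$ form a Zariski-dense subset of $\mathcal{W}_H(\qpbar)$, and by linear independence of characters the morphism is uniquely determined.

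The hard part will be the last step: carefully tracking the interplay between the representation-dependent permutation $\sigma$ of Hypothesis \ref{hypothesis_1} and the globally fixed $\sigma_\nu$, and verifying that the block-preserving constraint on $\sigma_\nu$ removes all residual freedom consistently across all admissible pairs of representations.
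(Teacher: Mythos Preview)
Your first paragraph contains a genuine error. Setting $\chi=1$ in the second bullet of Hypothesis~\ref{hypothesis_1} gives
\[
\delta\circ\Lambda^*_\nu \;=\; (1\circ\mathcal{R}'_\nu)\cdot(\delta\circ\Xi^*_{\mathcal{W},\nu}),
\]
not $\delta\circ\Lambda^*_\nu=\delta\circ\Xi^*_{\mathcal{W},\nu}$. The factor $1\circ\mathcal{R}'_\nu$ is the augmentation applied to $\mathcal{R}'_\nu(t)$, and since $\mathcal{R}'_\nu$ does \emph{not} carry group elements to group elements (it introduces the scalars $\mu_{n-n_i,\nu}(\cdots)$ and the modulus twists), this factor is a nontrivial character. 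The same objection applies to your symmetric step with $\delta=1$: the resulting factor $1\circ\Xi^*_{\mathcal{W},\nu}$ is unknown, because $\Xi^*_{\mathcal{W},\nu}$ is one of the morphisms you are trying to determine. In particular your conclusion $\Lambda^*_\nu=\mathcal{R}'_\nu|_{\qpbar[T^{\overline\omega,-}_{G,\nu}]}$ is simply false: compare the explicit formulae in the paper, where $\Lambda^\dagger_{0,\nu}$ carries the extra archimedean factors $x_{i,j}^{\alpha_{n-n_i,\omega}+\frac{n_i-n}{2}+n_1+\cdots+n_{i-1}}$ that are absent from $\mathcal{R}_{0,\nu}$. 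Analysing the second bullet for all $\chi,\delta$ shows only that $\mathcal{R}'_\nu(t)$, $\Xi^*_{\mathcal{W},\nu}(t)$ and $\Lambda^*_\nu(t)$ are monomials supported on the same group element of $T^{\overline\omega}_{H,\nu}$; the scalar in $\Xi^*_{\mathcal{W},\nu}(t)$ (and hence in $\Lambda^*_\nu(t)$) is left undetermined by the second bullet alone.

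The paper therefore argues in the opposite order. The first bullet of Hypothesis~\ref{hypothesis_1} constrains $\Xi^*_{\mathcal{W},\nu}$ as a map on all of $\qpbar[T_{G,\nu}]$ (the highest-weight identity is an equality of characters of the full torus $T_{G,\nu}$, not just of $T^0_{G,\nu}$), and the explicit archimedean transfer recipe shows that any solution must be of the form $\Xi^*_{\mathcal{W},0,\nu}\circ\iota_{\sigma'}$ for some $\sigma'\in\mathfrak{S}_n$. The second bullet then forces $\sigma'$ to be compatible with the fixed $\sigma_\nu$, i.e.\ to be the identity. Once $\Xi^*_{\mathcal{W},\nu}$ is pinned down, the second bullet determines $\Lambda^*_\nu$. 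Your strategy of extracting the $T^{\overline\omega}$-part first from the second bullet and then filling in the $T^0$-part from the first bullet cannot be made to work without this reversal, because the missing scalar lives precisely on $T^{\overline\omega}_{G,\nu}$ and is fixed only by the archimedean condition.
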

\begin{proof}
The fact that the morphisms satisfy Hypothesis \ref{hypothesis_1} follows from a simple check.  
To see that the morphisms are unique, we observe the following.  
By the first condition of Hypothesis \ref{hypothesis_1}, one observes that
the morphism
$\Xi^*_{\mathcal{W},\nu} =  \Xi^*_{\mathcal{W},0, \nu} \circ \iota_{\sigma_\nu}$ is uniquely determined up to
 composing with an isomorphism of the form $\iota_{\sigma'}$ where $\sigma' \in \mathfrak{S}_n$.
 The second condition of Hypothesis \ref{hypothesis_1} forces  $\sigma'$ to be the identity permutation, that is the morphism
 $\Xi^*_{\mathcal{W},\nu}$ is the unique morphism to satisfy
 Hypothesis \ref{hypothesis_1}.
 The uniqueness of the morphisms $\Lambda^*_\nu$  is   seen to follow
 from the second condition of Hypothesis \ref{hypothesis_1}.
 
 \end{proof}

 \subsection{$p$-adic Langlands functoriality for the Langlands direct sum}
 \label{subsection_p_adic_functorality_Langlands_direct_sum}
 For all $\nu \in S_p$, let $\sigma_\nu \in \mathfrak{S}_n$ as in Lemma \ref{lemma_accessible_refinements_mapped_accessible_refinements}.
 We shall consider data $(p, S, e_{U_{\vec{n}}}, \phi_{U_{\vec{n}}})$ and $(p, S, e_{U_n}, \phi_{U_n})$
of the following form.
\begin{itemize}
	\item $E/F$ is a totally imaginary quadratic extension of a totally real field that is unramified at all finite places.
	\item $\lambda \not\in S_p$ is a non-archimedean place of $F$ that does not split in $E$.
	\item $S$ is a finite set 
	of places of $F$ such that $\lambda \in S$, $S \cap S_p = \emptyset$, and if $\nu \in S - \lambda$ then
	$\nu$ is non-archimedean and splits in $E$.
		\item For all $\nu \in S - \lambda$, $e_{U_{\vec{n}},\nu} = \mathbf{1}_{I_{U_{\vec{n}},\nu}}$ 
		(resp.  $e_{U_n,\nu} = \mathbf{1}_{I_{U_n,\nu}} $)
	seen as an idempotent
			of the Hecke algebra $\mathbf{C}_c^\infty(U_{\vec{n}}(F_\nu), \qpbar)$ 
			(resp.  $\mathbf{C}_c^\infty(U_{n}(F_\nu), \qpbar)$).
	\item $e_{U_{\vec{n}}, \lambda}$ corresponds to the Bernstein component of a supercuspidal representation 
	(cf. \cite[Example 7.3.3]{bellaiche_chenevier_book})
	$\sigma_{U_{\vec{n}}, \lambda} = \sigma_{U_{\vec{n}}, 1,\lambda} \times \cdots \times \sigma_{U_{\vec{n}}, r,\lambda}$ such that 
	for all $i \neq j$,
	\[
		\sigma_{U_{\vec{n}}, i, \lambda} \not\simeq \sigma_{U_{\vec{n}}, j,\lambda}. 
	\]
	$e_{U_n, \lambda}$ is the  sum of the idempotents
	corresponding 
		 to the Bernstein components containing the discrete series representations
	$\sigma_{U_n, \lambda} \in \Pi$ where $\Pi$ is the $L$-packet of discrete series representations of $U_n(F_\lambda)$ which are the $\xi_{\vec{n}}$-Langlands functorial transfer of
	$\sigma_{U_{\vec{n}}, \lambda}$ (see \cite[\S3.3.5]{white_endoscopy_unitary_group} for a discussion of the  local Langlands classification of discrete series representations for the quasi-split unitary group due to M{\oe}glin \cite{moeglin_local_langlands_unitary_group}).
	
	\item $\phi_{U_{\vec{n}}} \in \qpbar[T_{U_{\vec{n}}, S_p}^{--}]$ and $\phi_{U_n} = \Lambda^*(\phi_{U_{\vec{n}}})$ which due to our explicit description 
	of $\Lambda^*$ we see lies in
	$\qpbar[T_{U_n,S_p}^{--}]$.
\end{itemize} 
We  shall now construct the $p$-adic Langlands functionality morphism 
$\Xi : \mathcal{D}_{U_{\vec{n}}} \rightarrow \mathcal{D}_{U_n}$ for the Langlands direct sum $\xi_{\vec{n}}$
and our choice of $\sigma_{\nu}$ for $\nu \in S_p$.
In order to do so, we must first introduce an auxiliary eigenvariety.

\subsubsection{Auxiliary eigenvariety}  
\label{subsubsection_auxiliary_eigenvarieties}
  The morphism
\[
	\Lambda^* \otimes \xi^*_{\mathrm{ur}} : \mathcal{H}^-_{U_{{n}}} \rightarrow \mathcal{H}^-_{U_{\vec{n}}}
\]
equips
the $p$-adic forms of type $(p, S, e_{U_{\vec{n}}}, \phi_{U_{\vec{n}}})$, 
denoted
 $\mathbf{S}_{U_{\vec{n}}}  = \{ S_{U_{\vec{n}}, i}; \iota_i : i \in \mathbf{N}\}$, with an action of the Hecke algebra
$\mathcal{H}^-_{U_{{n}}}$
\[
	\mathcal{H}^-_{U_n} \rightarrow \mathcal{H}^-_{U_{\vec{n}}} \rightarrow \mathrm{End}(\mathbf{S}_{U_{\vec{n}}})
\]
where the second morphism is the natural action of the Hecke algebra on $\mathbf{S}_{U_{\vec{n}}}$.  
Feeding the datum $(\mathcal{W}_{U_{\vec{n}}}, \mathbf{S}_{U_{\vec{n}}}, \mathcal{H}^-_{U_n}, \phi_{U_n})$ into
Buzzard's eigenvariety machine, we obtain the corresponding Eigenvariety
 $(\mathcal{D}', \psi', \kappa')$
which is seen to be reduced (cf. \cite[Proposition 3.9]{chenevier_jacquet_langlands_p_adic}).

\subsubsection{The two morphisms}
We shall construct 
the morphism 
$\Xi : \mathcal{D}_{U_{\vec{n}}} \rightarrow \mathcal{D}_{U_n}$ as the composite of two morphisms $\Xi_1$ and $\Xi_2$ where $\Xi_1 : \mathcal{D}_{U_{\vec{n}}} \rightarrow \mathcal{D}'$ 
and $\Xi_2 : \mathcal{D}' \rightarrow \mathcal{D}_{U_n}$ are $\qpbar$-rigid analytic morphisms
such that the following diagrams commute.

 \[
 \xymatrix{
\mathcal{D}_{U_{\vec{n}}} \ar[d]^{\kappa_{U_{\vec{n}}}}  \ar[r]^{\Xi_1} &  \mathcal{D'} \ar[ld]^{\kappa'}	 & 
	 \mathcal{H}^-_{U_n} \ar[d]^{\psi'}  \ar[r]^{ \Lambda^* \otimes \xi^*_{\mathrm{ur}}} &  \mathcal{H}^-_{U_{\vec{n}}} \ar[d]^{\psi_{U_{\vec{n}}}}	  \\
\mathcal{W}_{U_{\vec{n}}} &  & 
	\mathcal{O}\left(\mathcal{D}'\right) \ar[r]^{\Xi_1^*} & \mathcal{O}\left(\mathcal{D}_{U_{\vec{n}}}\right)
 }
 \]

 \[
 \xymatrix{
\mathcal{D}' \ar[d]^{\kappa'}  \ar[r]^{\Xi_2} &  \mathcal{D}_{U_n} \ar[d]^{\kappa_{U_n}}	 & 
	 \mathcal{H}^-_{U_n} \ar[d]^{\psi_{U_n}}  \ar[rd]^{\psi'}  & 	  \\
\mathcal{W}_{U_{\vec{n}}} \ar[r]^{\Xi_{\mathcal{W}}} & \mathcal{W}_{U_n} & 
	\mathcal{O}\left(\mathcal{D}_{U_n}\right) \ar[r]^{\Xi^*_2} & \mathcal{O}\left(\mathcal{D}'\right)
 }
 \]

\subsubsection{Existence of the  morphism $\Xi_1$}
\begin{lemma}
\label{lemma_existence_morphism_1}
There exists a $\qpbar$-rigid analytic morphism
$\Xi_1 : \mathcal{D}_{U_{\vec{n}}} \rightarrow \mathcal{D}'$ for which the
respective diagrams in Section \ref{subsubsection_auxiliary_eigenvarieties} commute.

\end{lemma}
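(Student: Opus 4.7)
The plan is to apply Buzzard's eigenvariety machine directly, leveraging the fact that $\mathcal{D}_{U_{\vec{n}}}$ and $\mathcal{D}'$ are constructed from the same system of Banach modules $\mathbf{S}_{U_{\vec{n}}}$ over the same weight space $\mathcal{W}_{U_{\vec{n}}}$. The only distinction lies in the Hecke algebra acting on $\mathbf{S}_{U_{\vec{n}}}$ ($\mathcal{H}^-_{U_{\vec{n}}}$ acts tautologically while $\mathcal{H}^-_{U_n}$ acts via $\Lambda^* \otimes \xi^*_{\mathrm{ur}}$) and the chosen compact operator. The compact operators, however, induce the same endomorphism of $\mathbf{S}_{U_{\vec{n}}}$, thanks to the compatibility $\phi_{U_n} \leftrightarrow \phi_{U_{\vec{n}}}$ built into the definitions in Section \ref{subsection_p_adic_functorality_Langlands_direct_sum}. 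The first step is to verify that $\phi_{U_n}$ acts compactly on $\mathbf{S}_{U_{\vec{n}}}$ and lies in $\mathrm{Comp}(\mathbf{S}_{U_{\vec{n}}})$; both are immediate since the corresponding assertions for $\phi_{U_{\vec{n}}}$ are already known and the two actions coincide.

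The second step is to work locally. Using the canonical admissible cover of $\mathcal{D}_{U_{\vec{n}}}$ provided by Theorem \ref{theorem_existence_eigenvariety}, each piece has the form $\Omega = \mathrm{Spm}(\mathcal{T}_\Omega)$, where $\mathcal{T}_\Omega$ is the $\mathcal{O}(V)$-subalgebra of the endomorphism ring of the slope-$\leq s$ submodule of $\mathbf{S}_{U_{\vec{n}}}(V)$ generated by the image of $\mathcal{H}^-_{U_{\vec{n}}}$, for $V = \kappa_{U_{\vec{n}}}(\Omega)$ a slope-adapted affinoid in $\mathcal{W}_{U_{\vec{n}}}$. Since $\phi_{U_{\vec{n}}}$ and $\phi_{U_n}$ act identically on $\mathbf{S}_{U_{\vec{n}}}$, their Fredholm series coincide, and the slope-adapted covers of $\mathcal{W}_{U_{\vec{n}}}$ required to build $\mathcal{D}_{U_{\vec{n}}}$ and $\mathcal{D}'$ may be taken to be the same. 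The corresponding piece $\Omega' = \mathrm{Spm}(\mathcal{T}'_V)$ of $\mathcal{D}'$ above $V$ then uses the identical slope decomposition, with $\mathcal{T}'_V$ generated by the image of $\mathcal{H}^-_{U_n}$ in the same endomorphism ring. Because this image is contained in the image of $\mathcal{H}^-_{U_{\vec{n}}}$, one obtains an inclusion $\mathcal{T}'_V \hookrightarrow \mathcal{T}_\Omega$ of finite $\mathcal{O}(V)$-algebras, which dualises to a morphism $\Omega \to \Omega'$ of affinoids.

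Finally, these local morphisms are compatible with restriction to smaller slope-adapted affinoids, so they glue to a global morphism $\Xi_1 : \mathcal{D}_{U_{\vec{n}}} \to \mathcal{D}'$. The commutativity of the first diagram in Section \ref{subsubsection_auxiliary_eigenvarieties}, namely $\kappa' \circ \Xi_1 = \kappa_{U_{\vec{n}}}$, is tautological since both eigenvarieties are built over $\mathcal{W}_{U_{\vec{n}}}$ by construction. The commutativity of the second diagram, $\Xi_1^* \circ \psi' = \psi_{U_{\vec{n}}} \circ (\Lambda^* \otimes \xi^*_{\mathrm{ur}})$, is precisely the content of the inclusion $\mathcal{T}'_V \hookrightarrow \mathcal{T}_\Omega$ at each level of the cover. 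The only point requiring care is the compatibility of the two canonical admissible covers; this is the main obstacle on paper, but it is dissolved by the observation that $\phi_{U_{\vec{n}}}$ and $\phi_{U_n}$ have the same Fredholm series on $\mathbf{S}_{U_{\vec{n}}}$, so no genuine obstruction arises.
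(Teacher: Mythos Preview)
Your argument is correct and follows essentially the same approach as the paper's proof: both exploit that $\mathcal{D}_{U_{\vec{n}}}$ and $\mathcal{D}'$ are built from the identical system of Banach modules $\mathbf{S}_{U_{\vec{n}}}$ with identically acting compact operator, so that the Fredholm data and admissible covers agree, and the inclusion of Hecke-algebra images $A'_{\Omega^*} \hookrightarrow A_{U_{\vec{n}},\Omega^*}$ on each finite-slope piece yields the local morphisms which glue. The only cosmetic difference is that the paper phrases the local construction in terms of the canonical admissible cover $\mathcal{C}^*_{U_{\vec{n}}}$ of the Fredholm hypersurface $Z(P_{\phi_{U_{\vec{n}}}})$ rather than the induced cover of $\mathcal{D}_{U_{\vec{n}}}$, but these are equivalent viewpoints.
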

\begin{proof}
In order to construct $\Xi_1$, we shall require Buzzard's construction of the eigenvariety which we shall briefly recall.  Details of this construction can be found in either \cite[\S7.3.6]{bellaiche_chenevier_book} or \cite{buzzard_eigenvariety}.

One associates to $\phi_{U_{\vec{n}}}$ a unique Fredholm power series
$P_{\phi_{U_{\vec{n}}}}(T) \in 1 + T \cdot \mathcal{O}(\mathcal{W}_{U_{\vec{n}}})\{\{T\}\}$ 
 such that for all open affinoids $V \subset \mathcal{W}_{U_{\vec{n}}}$ and for all $i \in \mathbf{N}$ sufficiently large
 \[
	P_{\phi_{U_{\vec{n}}}}(T)|_V = \det(1 - T \cdot \phi_{U_{\vec{n}}}|_{S_{U_{\vec{n}}, i}(V)}) \in 1 + T \cdot \mathcal{O}(V)\{\{T\}\}.
\]
Let  $Z(P_{\phi_{U_{\vec{n}}}}) \subset  \mathcal{W}_{U_{\vec{n}}} \times \mathbf{G}_m^{\mathrm{rig}}$ denote the associated Frodholm hypersurface.  
The eigenvariety $\mathcal{D}_{U_{\vec{n}}}$ is constructed as a cover for
 $Z(P_{\phi_{U_{\vec{n}}}})$.   
We shall write
 $\text{pr}_1 : Z(P_{\phi_{U_{\vec{n}}}}) \rightarrow \mathcal{W}_{U_{\vec{n}}}$ for the projection map onto the $\mathcal{W}_{U_{\vec{n}}}$ component.
There exists a canonical admissible covering $\mathcal{C}^*_{U_{\vec{n}}}$
of
 $Z(P_{\phi_{U_{\vec{n}}}})$ 
 given by the open affinoids $\Omega^* \subset  Z(P_{\phi_{U_{\vec{n}}}})$ for which the image
$\text{pr}_1(\Omega^*) \subset \mathcal{W}_{U_{\vec{n}}}$ is an open affinoid and the induced map
$\text{pr}_1|_{\Omega^*} : \Omega^* \rightarrow \text{pr}_1(\Omega^*)$ is finite.
If $\Omega^* \in \mathcal{C}^*_{U_{\vec{n}}}$ and  $V = \text{pr}_1(\Omega^*)$, then
one can use the resultant to canonically factorize 
\[
P_{\phi_{U_{\vec{n}}}}|_V  = Q_{\phi_{U_{\vec{n}}}}R_{\phi_{U_{\vec{n}}}}\in \mathcal{O}(V)\{\{T\}\}.
\]
This induces
for $i \in \mathbf{N}$ sufficiently large, an $\mathcal{O}(V)$-Banach module $\mathcal{H}_{U_{\vec{n}}}^-$-equivariant decomposition
\[
{S}_{U_{\vec{n}},i}(V) = {S}_{U_{\vec{n}}}(\Omega^*) \oplus {N}(\Omega^*, i)
\]
where ${S}_{\vec{n}}(\Omega^*)$ is a finite projective $\mathcal{O}(V)$-module that is independent of $i$.
The part of the eigenvariety lying above $\Omega^*$, that is $\nu_{U_{\vec{n}}}^{-1}(\Omega^*)$ is equal to the maximal spectrum of the image of $\mathcal{H}^-_{U_{\vec{n}}}$ in 
$\mathrm{End}({S}_{U_{\vec{n}}}(\Omega^*))$.

Consider now the analogous objects used in the construction of the eigenvariety $\mathcal{D}'$.
By construction, 
the underlying Banach spaces ${S}_{U_{\vec{n}},i}(V)$ are identical to those used in the construction of $\mathcal{D}_{U_{\vec{n}}}$ and the corresponding action of $\phi_{U_{{n}}}$ is equal to the action of $\phi_{U_{\vec{n}}}$.
It follows that the corresponding Fredholm power series are equal
and the
 associated Fredholm hypersurfaces are identical.

Let
$\Omega^* \in \mathcal{C}^*_{U_{\vec{n}}}$, let $V = \text{pr}_1(\Omega^*)$, and let $i\in \mathbf{N}$ be
sufficiently large.  Using the obvious notation since the Fredholm power series are identical, we see that 
$\Omega^* \in {\mathcal{C}^{*}}'$ and
\[
{S}_{U_{\vec{n}}}(\Omega^*) = S'(\Omega^*).
\]
Write $A'_{ \Omega^*}$ (resp. $A_{U_{\vec{n}}, \Omega^*}$) for the image of 
$\mathcal{H}^-_{U_n}$ (resp. $\mathcal{H}^-_{U_{\vec{n}}}$) in
$\mathrm{End}({S}_{U_{\vec{n}}}(\Omega^*))$.
Since $\mathcal{H}^-_{U_n}$ acts through the morphism 
$\Lambda^* \otimes \xi^*_{\mathrm{ur}} : \mathcal{H}^-_{U_{{n}}} \rightarrow \mathcal{H}^-_{U_{\vec{n}}}$,
we have a natural embedding $A'_{\Omega^*} \hookrightarrow A_{U_{\vec{n}}, \Omega^*}$.
This induces a morphism between the parts of the eigenvariety lying above 
 $\Omega^*$:
\[ \nu_{U_{\vec{n}}}^{-1}(\Omega^*) \rightarrow \nu'^{-1}(\Omega^*).\]
These morphisms glue together to give a morphism 
$\Xi_1 : \mathcal{D}_{U_{\vec{n}}} \rightarrow \mathcal{D}'$, which is  seen to make the
respective diagrams in Section \ref{subsubsection_auxiliary_eigenvarieties} commute.
\end{proof}

\subsubsection{Existence of the  morphism $\Xi_2$}
The morphism $\Xi_2$ shall be constructed by
applying some work of Chenevier \cite{chenevier_jacquet_langlands_p_adic} to
 interpolate the classical Langlands functoriality transfer for the Langlands direct sum.

\begin{lemma}
\label{lemma_classical_transfer_exists_result}
Let $\pi_{U_{\vec{n}}}$ be an  automorphic representation of $U_{\vec{n}}(\mathbf{A})$
such that
\begin{itemize}
	\item for all archimedean places $\nu$, $\pi_{U_{\vec{n}}, \nu}$ has regular highest weight and 
	the local $\xi$-Langlands functorial transfer to $U_n(F_\nu)$ exists and has regular highest weight, 
	\item for all non-archimedean places $\nu \neq \lambda$, $\pi_{U_{\vec{n}}, \nu}^{K_{U_{\vec{n}}, \nu}} \neq 0$, and
	\item $\pi_{U_{\vec{n}},\lambda} \simeq \sigma_{U_{\vec{n}}, \lambda}$.
\end{itemize}
Then,
\begin{itemize}
\item  $\pi_{U_{\vec{n}}}$ appears in the discrete automorphic spectrum of $U_{\vec{n}}(\mathbf{A})$ with multiplicity one,
\item  $\pi_{U_{\vec{n}}, \nu}$ is tempered for all non-archimedean places $\nu$ that split in $E$, 
\item there exists an automorphic representation $\pi_{U_n}$ of $U_{n}(\mathbf{A})$  such that $\pi_{U_n}$ is the $\xi_{\vec{n}}$-Langlands functorial transfer of  $\pi_{U_{\vec{n}}}$ at all places, and
\item there exists a constant $C \in \mathbf{N}$ depending only upon $\sigma_{U_{\vec{n}}, \lambda}$, such that
		$\dim e_{U_{\vec{n}}}(\pi_{U_{\vec{n}}, f}) \leq C \cdot \dim e_{U_n}(\pi_{U_n, f})$.
\end{itemize}
\end{lemma}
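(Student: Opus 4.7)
The plan is to derive this as a consequence of the global endoscopic classification for unitary groups due to Mok in the quasi-split case and extended to our inner forms in the author's prior work \cite{white_endoscopy_unitary_group}, combined with temperedness results for cohomological representations of $GL_n$. First I would unwind the $L$-homomorphism $\xi_{\vec{n}}$ to identify it with the standard parabolic/endoscopic lifting from $U_{\vec{n}}$ to $U_n$, twisted by the auxiliary characters $\mu_{n-n_i}$. The hypothesis that the supercuspidals $\sigma_{U_{\vec{n}}, i, \lambda}$ are pairwise non-isomorphic is the essential non-degeneracy input at $\lambda$: it ensures that the local base change of $\sigma_{U_{\vec{n}}, \lambda}$ to $GL_n(F_\lambda)$ has the form $\tau_1 \boxplus \cdots \boxplus \tau_r$ with pairwise non-isomorphic supercuspidals (after twisting by the $\mu_{n-n_i}$), so that the combined local parameter is discrete. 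Via M\oe{}glin's local classification (invoked through \cite[\S 3.3.5]{white_endoscopy_unitary_group}), this produces the $L$-packet $\Pi$ of discrete series representations of $U_n(F_\lambda)$ referenced in the choice of $e_{U_n, \lambda}$.

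Next I would produce the global transfer $\pi_{U_n}$. The archimedean regularity hypotheses and the discreteness at $\lambda$ together force the global Arthur parameter associated to $\pi_{U_{\vec{n}}}$ via $\xi_{\vec{n}}$ to be a \emph{generic} (tempered) discrete parameter for $U_n$, with trivial packet-centralizer contribution. The classification of the discrete spectrum then supplies a discrete automorphic representation $\pi_{U_n}$ of $U_n(\mathbf{A})$ whose local components realize the local functorial transfer at every place; multiplicity one for $\pi_{U_{\vec{n}}}$ follows from Arthur's multiplicity formula, since the distinctness at $\lambda$ and regularity at infinity collapse the relevant centralizer to its trivial piece. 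Temperedness of $\pi_{U_n, \nu}$ at split non-archimedean $\nu$ is a consequence of the Ramanujan property for regular cohomological representations of $GL_n$ (Caraiani, Shin, Clozel--Harris--Taylor applied via base change), and hence $\pi_{U_{\vec{n}}, \nu}$ is tempered as an irreducible constituent of the corresponding parabolic restriction.

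Finally the dimension bound is purely local at $\lambda$, since the idempotents $e_{U_{\vec{n}}, \nu}$ and $e_{U_n, \nu}$ agree away from $\lambda$ (being Iwahori or hyperspecial idempotents attached to the same Bernstein components on both sides). At $\lambda$ the idempotent $e_{U_n, \lambda}$ is by construction a finite sum of Bernstein idempotents, one for each $\xi_{\vec{n}}$-transfer of $\sigma_{U_{\vec{n}}, \lambda}$ landing in $\Pi$, and for each the dimension of $e$-fixed vectors in the relevant discrete series is bounded in terms of $\sigma_{U_{\vec{n}}, \lambda}$ alone, yielding the constant $C$. The main obstacle is the first step: verifying that the distinctness and archimedean regularity hypotheses really do force a \emph{discrete, tempered} global parameter with the trivial packet structure needed to invoke multiplicity one and the existence of the exact local transfer; modulo the cited endoscopic classification, this reduces to a bookkeeping exercise on parameters.
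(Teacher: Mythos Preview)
Your overall strategy for the first three bullets is essentially the one in the paper: pass to $GL_{n_i}(\mathbf{A}_E)$ by base change (the paper cites \cite[Theorem 6.1]{white_endoscopy_unitary_group} for cuspidality of the $\Pi_i$), invoke Shin for temperedness at finite places, form the isobaric sum $\Pi' = \mu_{n-n_1}\Pi_1 \boxplus \cdots \boxplus \mu_{n-n_r}\Pi_r$, and descend to $U_n$ via the endoscopic classification (the paper uses \cite[Theorem 11.2, Corollary 11.3]{white_endoscopy_unitary_group} rather than Mok). So that part is fine.

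The gap is in your treatment of the dimension bound. Your claim that the idempotents $e_{U_{\vec{n}},\nu}$ and $e_{U_n,\nu}$ ``agree'' away from $\lambda$ is not correct: at $\nu \in S_p$ and $\nu \in S - \{\lambda\}$ these are Iwahori idempotents on \emph{different} groups $GL_{n_1}\times\cdots\times GL_{n_r}(F_\nu)$ and $GL_n(F_\nu)$, and the Iwahori-fixed dimensions need not coincide (e.g.\ for unramified principal series they are $n_1!\cdots n_r!$ versus $n!$). What one actually needs is the \emph{inequality}
\[
\dim \pi_{U_{\vec{n}},\nu}^{I_{U_{\vec{n}},\nu}} \;\leq\; \dim \pi_{U_n,\nu}^{I_{U_n,\nu}}
\]
at each such $\nu$, and this is where the temperedness you established is used: by Lemma~\ref{lemma_bernstein_borel_casselman_matsumoto} both sides count accessible refinements, and Lemma~\ref{lemma_accessible_refinements_mapped_accessible_refinements} (which requires $\pi_{U_{\vec{n}},\nu}$ tempered) shows that the map $\chi \mapsto \chi \circ \mathcal{R}_{0,\nu}\circ\iota_{\sigma_\nu}$ sends accessible refinements injectively to accessible refinements. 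Only once this is in hand does the constant $C$ reduce to a purely local quantity at $\lambda$, namely the maximum over $\sigma_{U_n,\lambda}\in\Pi$ of $\lceil \dim e_{U_{\vec{n}},\lambda}(\sigma_{U_{\vec{n}},\lambda}) / \dim e_{U_n,\lambda}(\sigma_{U_n,\lambda})\rceil$. Without this step your argument does not yield a $C$ independent of $\pi_{U_{\vec{n}}}$.
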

\begin{proof}
Let us  assume the terminology of \cite{white_endoscopy_unitary_group}.
The multiplicity one statement is a special case of \cite[Theorem 11.2]{white_endoscopy_unitary_group}.
Write  $\pi_{U_{\vec{n}}} =  \pi_{U_{\vec{n}}, 1} \times \cdots \times \pi_{U_{\vec{n}}, r}$.
For $i=1,\ldots, r$,
  let
$\Pi_i$ be the Langlands base change of  $\pi_{U_{\vec{n}}, i}$ to $GL_{n_i}(\mathbf{A}_E)$  which
exists as a cuspidal automorphic representation (cf. \cite[Theorem 6.1]{white_endoscopy_unitary_group}).  If $\nu = \upsilon \upsilon'$ is a non-archimedean place of $F$ that splits in $E$, then by the definition of local Langlands base change
\[
	\Pi_{i, \nu} = \Pi_{i, \upsilon} \times \Pi_{i, \upsilon'} \simeq  \pi_{U_{\vec{n}}, i, \nu} \times \pi_{U_{\vec{n}}, i, \nu}^\vee \ \ \forall i =1,\ldots, r.
\]
By a result of Shin \cite[Corollary 1.3]{shin_galois_rep}, the representations $\Pi_{i}$ are tempered at all finite places.
It follows that  $\pi_{U_{\vec{n}}, \nu}$ is tempered. 
To see the existence of $\pi_{U_{{n}}}$, we first define the automorphic representation of $GL_n(\mathbf{A}_E)$
\[
	\Pi' =  \mu_{n - n_1} \Pi_1  \boxplus \cdots \boxplus \mu_{n-n_r} \Pi_r.
\]
Let $\pi_{U_n}$ be an automorphic representation of $U_{n}(\mathbf{A})$ whose Langlands base change is isomorphic to $\Pi'$
whose existence is guaranteed by \cite[Corollary 11.3]{white_endoscopy_unitary_group}.  One  checks from the respective definitions that $\pi_{U_n}$ is the $\xi_{\vec{n}}$-Langlands functorial transfer of  $\pi_{U_{\vec{n}}}$ at all places.

For the final statement, define 
\[
C = \max\left(\left\lceil \frac{{\dim e_{U_{\vec{n}}, \lambda}(\sigma_{U_{\vec{n}}, \lambda})}}{\dim e_{U_{n}, \lambda}(\sigma_{U_n, \lambda})}
\right\rceil : \sigma_{U_n, \lambda} \in \Pi
\right)
\]
where $\Pi$ denotes the $L$-packet of discrete series representations of $U_n(F_\lambda)$ which are the Langlands functorial transfer of $\sigma_{U_{\vec{n}},\lambda}$.  We observe that
$\dim e_{U_{\vec{n}}}(\pi_{U_{\vec{n}}, f}) = \prod_{\nu\in S \cup S_p}\dim e_{U_{\vec{n}}, \nu}(\pi_{U_{\vec{n}}, \nu})$
and $\dim e_{U_{{n}}}(\pi_{U_{{n}}, f}) = \prod_{\nu\in S \cup S_p}\dim e_{U_{{n}}, \nu}(\pi_{U_{{n}}, \nu})$.
If $\nu \in S \cup S_p - \lambda$, then
$\dim e_{U_{\vec{n}}, \nu}(\pi_{U_{\vec{n}}, \nu}) = \dim \pi_{U_{\vec{n}}, \nu}^{I_{U_{\vec{n}, \nu}}}$
(resp. 
$\dim e_{U_{{n}}, \nu}(\pi_{U_{{n}}, \nu}) = \dim \pi_{U_{{n}}, \nu}^{I_{U_{{n}, \nu}}}$)
which is equal to the number of accessible refinements of $\pi_{U_{\vec{n}}, \nu}$ (resp.  $\pi_{U_{{n}}, \nu}$)
(see Lemma \ref{lemma_bernstein_borel_casselman_matsumoto} whose result trivially extends to $\nu \in S \cup S_p - \lambda$).  It follows from  Lemma \ref{lemma_accessible_refinements_mapped_accessible_refinements} (trivially extended to 
$\nu \in S \cup S_p - \lambda$) that
\[
\dim e_{U_{\vec{n}}, \nu}(\pi_{U_{\vec{n}}, \nu}) \leq
\dim e_{U_{{n}}, \nu}(\pi_{U_{{n}}, \nu}).
\]
The result follows.
\end{proof}

\begin{lemma}
\label{lemma_existence_morphism_2}
There exists a $\qpbar$-rigid analytic morphism
$\Xi_2 :  \mathcal{D}' \rightarrow \mathcal{D}_{U_{{n}}}$ for which the
respective diagrams in Section \ref{subsubsection_auxiliary_eigenvarieties} commute.
\end{lemma}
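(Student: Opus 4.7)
The plan is to construct $\Xi_2$ by applying Chenevier's interpolation technique from \cite[\S3]{chenevier_jacquet_langlands_p_adic} to the classical Langlands functorial transfer established in Lemma \ref{lemma_classical_transfer_exists_result}, exploiting the fact that by Lemma \ref{lemma_weight_space_morphism_isomorphism} the weight space morphism $\Xi_{\mathcal{W}}$ is an isomorphism.

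First, I would isolate a Zariski-dense and accumulating subset $Z' \subset \mathcal{D}'(\qpbar)$ consisting of classical points. Concretely, one takes $Z'$ to be the set of points corresponding to pairs $(\pi_{U_{\vec{n}}}, \chi_{U_{\vec{n}}})$ where $\pi_{U_{\vec{n}}}$ satisfies the regularity and local hypotheses of Lemma \ref{lemma_classical_transfer_exists_result} (regular highest weight at infinity with regular $\xi_{\vec{n}}$-transfer, Iwahori-unramified at non-archimedean $\nu \neq \lambda$, and isomorphic to $\sigma_{U_{\vec{n}}, \lambda}$ at $\lambda$) and $\chi_{U_{\vec{n}}}$ is an accessible refinement compatible with the choice of $\sigma_\nu$ in Lemma \ref{lemma_accessible_refinements_mapped_accessible_refinements}. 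Density and accumulation of $Z'$ inside $\mathcal{D}'$ follow from the same small-slope-is-classical argument underlying Theorem \ref{theorem_existence_eigenvariety}.

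Next, for each $z \in Z'$ one uses Lemma \ref{lemma_classical_transfer_exists_result} to produce a global $\xi_{\vec{n}}$-Langlands transfer $\pi_{U_n}$, and Lemma \ref{lemma_accessible_refinements_mapped_accessible_refinements} to produce an accessible refinement of $\pi_{U_n}$ at each $\nu \in S_p$. By the defining formulas for $\Lambda^*$, $\xi^*_{\mathrm{ur}}$, and $\Xi_{\mathcal{W}}$ (together with Hypothesis \ref{hypothesis_1} and the final lemma preceding it), the resulting classical point $z_2 \in \mathcal{D}_{U_n}(\qpbar)$ is compatible with the Hecke eigensystem of $z$ via $\Lambda^* \otimes \xi^*_{\mathrm{ur}}$ and with the weight $\kappa'(z)$ via $\Xi_{\mathcal{W}}$. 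This defines a set-theoretic map $Z' \to \mathcal{D}_{U_n}(\qpbar)$ making the relevant diagrams commute pointwise on $Z'$.

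To upgrade this to a rigid analytic morphism, I would invoke Chenevier's interpolation criterion. The hypotheses to verify are (i) Zariski density and accumulation of $Z'$, (ii) pointwise compatibility of weights and Hecke eigensystems, and (iii) a bounded multiplicity condition, which is supplied precisely by the final inequality $\dim e_{U_{\vec{n}}}(\pi_{U_{\vec{n}},f}) \leq C \cdot \dim e_{U_n}(\pi_{U_n,f})$ of Lemma \ref{lemma_classical_transfer_exists_result}, combined with the multiplicity-one statement for $\pi_{U_{\vec{n}}}$ and Lemma \ref{lemma_description_of_module_space_classical_terms_automorphic_representations}. I expect (iii) to be the main obstacle: one must show that the eigensystems appearing in $\mathbf{S}_{U_{\vec{n}}}$, viewed as $\mathcal{H}^-_{U_n}$-eigensystems via $\Lambda^* \otimes \xi^*_{\mathrm{ur}}$, appear in $\mathbf{S}_{U_n}$ with multiplicity controlled uniformly, so that the classical interpolation identifies the image of $\mathcal{H}^-_{U_n} \to \mathcal{O}(\mathcal{D}')$ with a quotient of the corresponding image in $\mathcal{O}(\mathcal{D}_{U_n})$. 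Once this is in place, Chenevier's machinery produces the $\qpbar$-rigid analytic morphism $\Xi_2$, and by the uniqueness argument of Lemma \ref{lemma_if_padic_func_exists_it_is_unique} commutativity of the diagrams in Section \ref{subsubsection_auxiliary_eigenvarieties} follows from the compatibilities already established on the Zariski-dense set $Z'$.
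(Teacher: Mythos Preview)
Your proposal is correct and follows essentially the same strategy as the paper: invoke Chenevier's interpolation theorem after verifying, at regular classical weights, that the classical transfer of Lemma~\ref{lemma_classical_transfer_exists_result} together with the accessible-refinement map of Lemma~\ref{lemma_accessible_refinements_mapped_accessible_refinements} and the uniform dimension bound supply the required input. The only presentational difference is that the paper makes Chenevier's criterion explicit as a divisibility of characteristic polynomials,
\[
\det\bigl(1 - T h\,\phi_{U_n}\big|_{S^{\mathrm{cl}}_{U_{\vec{n}},\kappa(W_{U_{\vec{n}}})}\otimes\kappa(W_{U_{\vec{n}}})}\bigr)\ \big|\ \det\bigl(1 - T h\,\phi_{U_n}\big|_{S^{\mathrm{cl}}_{U_n,\kappa(W_{U_n})}\otimes\kappa(W_{U_n})}\bigr)^{C},
\]
and then deduces this from an embedding of $\mathcal{H}^-_{U_n}$-modules $S^{\mathrm{cl}}_{U_{\vec{n}},\kappa(W_{U_{\vec{n}}})}\otimes\kappa(W_{U_{\vec{n}}}) \hookrightarrow \bigl(S^{\mathrm{cl}}_{U_n,\kappa(W_{U_n})}\otimes\kappa(W_{U_n})\bigr)^{\oplus C}$; your ``bounded multiplicity condition'' is exactly this, so there is no substantive difference.
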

\begin{proof}
We remind the reader that $\Xi_{\mathcal{W}} : \mathcal{W}_{U_{\vec{n}}} \rightarrow \mathcal{W}_{U_n}$ is an isomorphism which allows us to identify the two spaces (cf. Lemma \ref{lemma_weight_space_morphism_isomorphism}).
The existence of such a $\Xi_2$ will follow from the interpolation result of Chenevier \cite[Theorem 1]{chenevier_jacquet_langlands_p_adic} combined with a small slope is classical type result \cite[Proposition 2.17]{chenevier_application_varieties_hecke} upon confirmation that
for all irreducible admissible representations $W_{U_{\vec{n}}}$ of $U_{\vec{n}}(\mathbf{A}_\infty)$ of regular highest weight,  for all irreducible admissible representations $W_{U_n}$ of $U_n(\mathbf{A}_\infty)$ of regular highest weight such that
$\Xi_{\mathcal{W}}(\delta(W_{U_{\vec{n}}})) = \delta_{W_{U_n}}$, and for all $h \in \mathcal{H}^-_{U_n}$, we have that
\[
\det(1 - T h \cdot \phi_{U_{{n}}}|_{S^{\mathrm{cl}}_{U_{\vec{n}}, \kappa(W_{U_{\vec{n}}})} \otimes {\kappa(W_{U_{\vec{n}}})}}) 
|
\det(1 - T h \cdot \phi_{U_{{n}}}|_{S^{\mathrm{cl}}_{U_{{n}}, \kappa(W_{U_{{n}}})} \otimes {\kappa(W_{U_{{n}}})}}) 
\]
as elements of $\mathbf{C}_p[T]$
where we are using the morphism
$\Lambda^* \otimes \xi^*_{\mathrm{ur}} : \mathcal{H}^-_{U_{{n}}} \rightarrow \mathcal{H}^-_{U_{\vec{n}}}$
to view 
$S^{\mathrm{cl}}_{U_{\vec{n}}, \kappa(W_{U_{\vec{n}}})}$ as a $\mathcal{H}^-_{U_n}$-module.
  In fact, the result will follow from a slightly weaker statement namely that
there exists a $C \in \mathbf{N}$ such that 
for all  $W_{U_{\vec{n}}}$, $W_{U_n}$, and $h$  as above, we have that
\[
\det(1 - T h \cdot \phi_{U_{{n}}}|_{S^{\mathrm{cl}}_{U_{\vec{n}}, \kappa(W_{U_{\vec{n}}})} \otimes {\kappa(W_{U_{\vec{n}}})}})
|
\det(1 - T h \cdot \phi_{U_{{n}}}|_{S^{\mathrm{cl}}_{U_{{n}}, \kappa(W_{U_{{n}}})} \otimes {\kappa(W_{U_{{n}}})}})^C.
\]
The fact that this weaker statement suffices follows from the fact that the eigenvariety $\mathcal{D}_{U_n}$ is
seen to be 
 canonically isomorphic to the eigenvariety constructed from the data 
\[
(\mathcal{W}_{U_{{n}}}, \mathbf{S}_{U_{{n}}}^{\oplus C}, \mathcal{H}^-_{U_n}, \phi_{U_n})
\]
where the direct sum of system of (PR) $\qpbar$-Banach modules over $\mathcal{W}_{U_n}$ is defined in the logical way.

The desired statement is a simple  consequence of Lemma \ref{lemma_accessible_refinements_mapped_accessible_refinements}
and Lemma \ref{lemma_classical_transfer_exists_result}, which we shall now explain.
By Lemma \ref{lemma_description_of_module_space_classical_terms_automorphic_representations},
we have the following isomorphisms of $\mathcal{H}^-_{U_n}$-modules, 
\begin{eqnarray*}
S^{\mathrm{cl}}_{U_{{n}}, \kappa(W_{U_{{n}}})} \simeq \bigoplus_{\pi_{U_n}} m(\pi_{U_n}) e_{U_n}(\Pi_{U_n, f}) \otimes_{\mathbf{C}, \iota_p} \qpbar \\
S^{\mathrm{cl}}_{U_{\vec{n}}, \kappa(W_{U_{\vec{n}}})} \simeq \bigoplus_{\pi_{U_{\vec{n}}}} m(\pi_{U_{\vec{n}}}) e_{U_{\vec{n}}}(\Pi_{U_{\vec{n}}, f}) \otimes_{\mathbf{C}, \iota_p} \qpbar \\
\end{eqnarray*}
where $\pi_{U_n}$ (resp. $\pi_{U_{\vec{n}}}$) runs through the automorphic representations of 
$U_n(\mathbf{A})$ (resp. $U_{\vec{n}}(\mathbf{A})$) such that
 $\pi_{U_n, \infty} \simeq W_{U_n}$ (resp. $\pi_{U_{\vec{n}}, \infty} \simeq W_{U_{\vec{n}}}$) and
 $e_{U_n}(\Pi_{U_n, f}) \neq 0$ (resp. $e_{U_{\vec{n}}}(\Pi_{U_{\vec{n}}, f}) \neq 0$), and 
  $\pi_{U_{\vec{n}}}$ is viewed as a $\mathcal{H}^-_{U_n}$-module via the 
  morphism 
  $\Lambda^* \otimes \xi^*_{\mathrm{ur}} : \mathcal{H}^-_{U_{{n}}} \rightarrow \mathcal{H}^-_{U_{\vec{n}}}$.
 By
 Lemma \ref{lemma_bernstein_borel_casselman_matsumoto},
  Lemma \ref{lemma_accessible_refinements_mapped_accessible_refinements}
and Lemma \ref{lemma_classical_transfer_exists_result}, we have an embedding of 
 $\mathcal{H}^-_{U_n}$-modules
 \[
   S^{\mathrm{cl}}_{U_{\vec{n}}, \kappa(W_{U_{\vec{n}}})}
   \otimes {\kappa(W_{U_{\vec{n}}})}
   \hookrightarrow
    \left(S^{\mathrm{cl}}_{U_{{n}}, \kappa(W_{U_{{n}}})} 
 \otimes {\kappa(W_{U_{{n}}})}\right)^{\oplus C}
 \]
 where $C \in \mathbf{N}$ is chosen as in Lemma \ref{lemma_classical_transfer_exists_result}.
The result then follows.

\end{proof}

 \subsubsection{Existence of $\Xi$}
 
 \begin{theorem}
 Let $\sigma_\nu \in \mathfrak{S}_n$ be as in Lemma \ref{lemma_accessible_refinements_mapped_accessible_refinements}
 for all $\nu \in S_p$,  and let the 
 data $(p, S, e_{U_{\vec{n}}}, \phi_{U_{\vec{n}}})$ and $(p, S, e_{U_n}, \phi_{U_n})$ be as in
 Section \ref{subsection_p_adic_functorality_Langlands_direct_sum}.
  Then the $p$-adic Langlands functoriality morphism  
  \[
  	\Xi : \mathcal{D}_{U_{\vec{n}}} \rightarrow \mathcal{D}_{U_n}
  \]
  for the Langlands direct sum exists and is unique.
 \end{theorem}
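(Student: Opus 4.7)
The plan is to define the sought morphism as the composition $\Xi = \Xi_2 \circ \Xi_1$ of the two morphisms produced in Lemma \ref{lemma_existence_morphism_1} and Lemma \ref{lemma_existence_morphism_2}, and then verify that this composition is in fact a $p$-adic Langlands functoriality morphism for the tuple $(\xi_{\vec{n}}, \Xi_{\mathcal{W}}, \Lambda^*)$. Uniqueness will then be immediate from Lemma \ref{lemma_if_padic_func_exists_it_is_unique}. Since essentially all of the work has already been done in the preceding lemmas of Section \ref{subsection_p_adic_functorality_Langlands_direct_sum}, the proof should be a short diagram chase.

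More precisely, by Lemma \ref{lemma_existence_morphism_1} we have a $\qpbar$-rigid analytic morphism $\Xi_1 : \mathcal{D}_{U_{\vec{n}}} \to \mathcal{D}'$ compatible with the weight map (in the sense that $\kappa' \circ \Xi_1 = \kappa_{U_{\vec{n}}}$) and with the Hecke action (in the sense that $\Xi_1^* \circ \psi' = \psi_{U_{\vec{n}}} \circ (\Lambda^* \otimes \xi^*_{\mathrm{ur}})$). By Lemma \ref{lemma_existence_morphism_2} we have a $\qpbar$-rigid analytic morphism $\Xi_2 : \mathcal{D}' \to \mathcal{D}_{U_n}$ compatible with the weight map (in the sense that $\kappa_{U_n} \circ \Xi_2 = \Xi_{\mathcal{W}} \circ \kappa'$, identifying $\mathcal{W}_{U_{\vec{n}}}$ with $\mathcal{W}_{U_n}$ via the isomorphism $\Xi_{\mathcal{W}}$ of Lemma \ref{lemma_weight_space_morphism_isomorphism} as in the proof of Lemma \ref{lemma_existence_morphism_2}) and such that $\Xi_2^* \circ \psi_{U_n} = \psi'$. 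Concatenating these two diagrams yields
\[
    \kappa_{U_n} \circ (\Xi_2 \circ \Xi_1) = \Xi_{\mathcal{W}} \circ \kappa' \circ \Xi_1 = \Xi_{\mathcal{W}} \circ \kappa_{U_{\vec{n}}},
\]
and dually
\[
    (\Xi_2 \circ \Xi_1)^* \circ \psi_{U_n} = \Xi_1^* \circ \Xi_2^* \circ \psi_{U_n} = \Xi_1^* \circ \psi' = \psi_{U_{\vec{n}}} \circ (\Lambda^* \otimes \xi^*_{\mathrm{ur}}),
\]
which are exactly the commutativities required by the definition of a $p$-adic Langlands functoriality morphism for the tuple $(\xi_{\vec{n}}, \Xi_{\mathcal{W}}, \Lambda^*)$. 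Hence $\Xi := \Xi_2 \circ \Xi_1$ satisfies the required properties.

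Uniqueness is then a direct application of Lemma \ref{lemma_if_padic_func_exists_it_is_unique}: since $\mathcal{D}_{U_{\vec{n}}}$ and $\mathcal{D}_{U_n}$ are reduced, any rigid analytic morphism fitting into the two defining commutative diagrams is determined by its values on $\qpbar$-points, and these are in turn determined by the pair $(\Xi_{\mathcal{W}}, \Lambda^* \otimes \xi^*_{\mathrm{ur}})$. There is no serious obstacle in this argument; the content of the theorem lies entirely in the construction of the auxiliary eigenvariety $\mathcal{D}'$ and in Lemmas \ref{lemma_existence_morphism_1} and \ref{lemma_existence_morphism_2} (whose hard part was the classical interpolation input provided by Lemma \ref{lemma_classical_transfer_exists_result} and Chenevier's interpolation theorem). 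The present theorem is essentially a packaging statement that records $\Xi = \Xi_2 \circ \Xi_1$ as the desired functoriality morphism.
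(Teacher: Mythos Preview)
Your proposal is correct and follows the same approach as the paper: define $\Xi = \Xi_2 \circ \Xi_1$ using Lemmas \ref{lemma_existence_morphism_1} and \ref{lemma_existence_morphism_2}, then invoke Lemma \ref{lemma_if_padic_func_exists_it_is_unique} for uniqueness. The only difference is that the paper also cites Lemma \ref{lemma_langlands_direct_sum_after_fix_sigma_morphisms_weight_unique} (to record that Hypothesis \ref{hypothesis_1} is satisfied, so that the notion of $p$-adic Langlands functoriality morphism for $(\xi_{\vec{n}}, \Xi_{\mathcal{W}}, \Lambda^*)$ is well-posed and canonical once the $\sigma_\nu$ are fixed), whereas you leave this implicit; your explicit diagram chase is otherwise a faithful unpacking of the paper's terse ``the result follows''.
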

\begin{proof}
Define $\Xi =  \Xi_2 \circ \Xi_1$ where $\Xi_1$ is the morphism appearing in Lemma \ref{lemma_existence_morphism_1}
and $\Xi_2$ is the morphism appearing in Lemma \ref{lemma_existence_morphism_2}.
The result follows by Lemma \ref{lemma_if_padic_func_exists_it_is_unique}
and Lemma \ref{lemma_langlands_direct_sum_after_fix_sigma_morphisms_weight_unique}.
\end{proof}

\newpage
\bibliographystyle{amsalpha}
\bibliography{padicfunctoriality.bib}

\providecommand{\bysame}{\leavevmode\hbox to3em{\hrulefill}\thinspace}
\providecommand{\MR}{\relax\ifhmode\unskip\space\fi MR }
% \MRhref is called by the amsart/book/proc definition of \MR.
\providecommand{\MRhref}[2]{%
  \href{http://www.ams.org/mathscinet-getitem?mr=#1}{#2}
}
\providecommand{\href}[2]{#2}
\begin{thebibliography}{M{\oe}g07}

\bibitem[BC09]{bellaiche_chenevier_book}
J.~Bella{\"{\i}}che and G.~Chenevier, \emph{Families of {G}alois
  representations and {S}elmer groups}, Ast\'erisque (2009), no.~324, xii+314.
  \MR{2656025 (2011m:11105)}

\bibitem[Ber84]{bernstein_irreducble_induction}
J.~N. Bernstein, \emph{{$P$}-invariant distributions on {${\rm GL}(N)$} and the
  classification of unitary representations of {${\rm GL}(N)$}
  (non-{A}rchimedean case)}, Lie group representations, {II} ({C}ollege {P}ark,
  {M}d., 1982/1983), Lecture Notes in Math., vol. 1041, Springer, Berlin, 1984,
  pp.~50--102. \MR{748505 (86b:22028)}

\bibitem[Buz07]{buzzard_eigenvariety}
K.~Buzzard, \emph{Eigenvarieties}, {$L$}-functions and {G}alois
  representations, London Math. Soc. Lecture Note Ser., vol. 320, Cambridge
  Univ. Press, Cambridge, 2007, pp.~59--120. \MR{2392353 (2010g:11076)}

\bibitem[Che05]{chenevier_jacquet_langlands_p_adic}
G.~Chenevier, \emph{Une correspondance de {J}acquet-{L}anglands {$p$}-adique},
  Duke Math. J. \textbf{126} (2005), no.~1, 161--194. \MR{2111512
  (2006f:11144)}

\bibitem[Che10]{chenevier_application_varieties_hecke}
\bysame, \emph{Une application des vari\'et\'es de {H}ecke des groupes
  unitaries}, preprint, \url{http://www.math.polytechnique.fr/~chenevier},
  2010.

\bibitem[CM98]{coleman_mazur}
R.~Coleman and B.~Mazur, \emph{The eigencurve}, Galois representations in
  arithmetic algebraic geometry ({D}urham, 1996), London Math. Soc. Lecture
  Note Ser., vol. 254, Cambridge Univ. Press, Cambridge, 1998, pp.~1--113.
  \MR{1696469 (2000m:11039)}

\bibitem[dJ95]{dejong_article_ihes}
A.~J. de~Jong, \emph{Crystalline {D}ieudonn\'e module theory via formal and
  rigid geometry}, Inst. Hautes \'Etudes Sci. Publ. Math. (1995), no.~82, 5--96
  (1996). \MR{1383213 (97f:14047)}

\bibitem[Gro99]{gross_algebraic_mf}
B.~H. Gross, \emph{Algebraic modular forms}, Israel J. Math. \textbf{113}
  (1999), 61--93. \MR{1729443 (2001b:11037)}

\bibitem[HT01]{harris_taylor_local_langlands}
M.~Harris and R.~Taylor, \emph{The geometry and cohomology of some simple
  {S}himura varieties}, Annals of Mathematics Studies, vol. 151, Princeton
  University Press, Princeton, NJ, 2001, With an appendix by Vladimir G.
  Berkovich. \MR{1876802 (2002m:11050)}

\bibitem[Loe11]{loeffler_article}
D.~Loeffler, \emph{Overconvergent algebraic automorphic forms}, Proc. Lond.
  Math. Soc. (3) \textbf{102} (2011), no.~2, 193--228. \MR{2769113}

\bibitem[M{\'{\i}}n11]{minguez}
A.~M{\'{\i}}nguez, \emph{Unramified representations of unitary groups}, On the
  stabilization of the trace formula, Stab. Trace Formula Shimura Var. Arith.
  Appl., vol.~1, Int. Press, Somerville, MA, 2011, pp.~389--410. \MR{2856377}

\bibitem[M{\oe}g07]{moeglin_local_langlands_unitary_group}
C.~M{\oe}glin, \emph{Classification et changement de base pour les s\'eries
  discr\`etes des groupes unitaires {$p$}-adiques}, Pacific J. Math.
  \textbf{233} (2007), no.~1, 159--204. \MR{2366373 (2009d:22022)}

\bibitem[Rog90]{rogawski_book}
J.~D. Rogawski, \emph{Automorphic representations of unitary groups in three
  variables}, Annals of Mathematics Studies, vol. 123, Princeton University
  Press, Princeton, NJ, 1990. \MR{1081540 (91k:22037)}

\bibitem[Shi11]{shin_galois_rep}
S.~W. Shin, \emph{Galois representations arising from some compact {S}himura
  varieties}, Ann. of Math. (2) \textbf{173} (2011), no.~3, 1645--1741.
  \MR{2800722}

\bibitem[Whi12]{white_endoscopy_unitary_group}
P.-J. White, \emph{Tempered automorphic representations of the unitary group},
  preprint, \url{http://arxiv.org/abs/1106.1127}, 2012.

\bibitem[Zel80]{zelevisnsky}
A.~V. Zelevinsky, \emph{Induced representations of reductive {${{p}}$}-adic
  groups. {II}. {O}n irreducible representations of {${\rm GL}(n)$}}, Ann. Sci.
  \'Ecole Norm. Sup. (4) \textbf{13} (1980), no.~2, 165--210.

\end{thebibliography}

\end{document}